\definecolor{mno}{rgb}{0.5,0.1,0.5}
\newcommand{\Hmm}[1]{\leavevmode{\marginpar{\tiny%
$\hbox to 0mm{\hspace*{-0.5mm}$\leftarrow$\hss}%
\vcenter{\vrule depth 0.1mm height 0.1mm width \the\marginparwidth}%
\hbox to 0mm{\hss$\rightarrow$\hspace*{-0.5mm}}$\\\relax\raggedright #1}}}
\def\De{\Delta}
\newtheorem{theorem}{Theorem}
\newtheorem{thm}{Theorem}[section]
\newtheorem{cor}{Corollary}
\newtheorem{lem}[thm]{Lemma}
\newtheorem{lemma}[thm]{Lemma}
\newtheorem{pro}[thm]{Proposition}
\theoremstyle{definition}
\newtheorem*{defi}{Definition}
\newtheorem{eg}[thm]{Example}
\newtheorem{rem}[thm]{Remark}
\newcommand{\Z}{{\mathbb Z}}
\newcommand{\R}{{\mathbb R}}
\newcommand{\N}{{\mathbb N}}
\newcommand{\al}{{\alpha}}
\newcommand{\be}{{\beta}}
\newcommand{\eps}{{\varepsilon}}
\newcommand{\gm}{{\gamma}}
\newcommand{\Gm}{{\Gamma}}
\newcommand{\si}{{\sigma}}
\newcommand{\lm}{{\lambda}}
\newcommand{\Deg}{{\mathrm{Deg}}}
\newcommand{\supp}{{\mathrm{supp}\,}}
\newcommand{\cb}{{\partial_C X}}
\newcommand{\cd}{\overline{\rm codim}_M}
\newcommand{\as}[1]{\left\langle #1\right\rangle}
\newcommand{\aV}[1]{\left\Vert #1\right\Vert}
\newcommand{\ov}[1]{\overline{ #1}}
\newcommand{\ow}[1]{\widetilde{ #1}}
\newcommand{\qn}{Q^{ \max}}
\newcommand{\cp}{{\rm Cap}}
\newcommand{\n}{n}
\begin{document}
\title[Self-adjoint extensions of graph Laplacians]
{A note on self-adjoint extensions of the Laplacian on weighted graphs }

\author[X. Huang]{Xueping Huang}
\address{Fakult\"at f\"ur Mathematik, Universit\"at Bielefeld,  33501 Bielefeld, Germany}
\email{xhuang1@math.uni-bielefeld.de}

\author[M. Keller]{Matthias Keller}
\address{Mathematisches Institut, Friedrich-Schiller-Universit\"at Jena,
07743 Jena, Germany}
\email{m.keller@uni-jena.de}

\author[J. Masamune]{Jun Masamune}
\address{Research Center for Pure and Applied Mathematics
Graduate School of Information Sciences
Tohoku University
6-3-09 Aramaki-Aza-Aoba, Aoba-ku, Sendai 980-8579, Japan}
\email{jmasamune@m.tohoku.ac.jp}

\author[R. Wojciechowski]{Rados{\l}aw K. Wojciechowski}
\address{Department of Mathematics and Computer Science,
York College of the City University of New York, 94 - 20 Guy R. Brewer Blvd., Jamaica, NY 11451, USA}
\email{rwojciechowski@gc.cuny.edu}

\date{\today}
\maketitle


\begin{abstract} \noindent We study the uniqueness of self-adjoint and Markovian extensions of the Laplacian on weighted graphs. We first show that, for locally finite graphs and a certain family of metrics, completeness of the graph implies uniqueness of these extensions. Moreover, in the case when the graph is not metrically complete and the Cauchy boundary has finite capacity, we characterize the uniqueness of the Markovian extensions.
\end{abstract}

 \section{Introduction}

Determining the uniqueness of self-adjoint extensions of a symmetric operator in a certain class is a fundamental topic of functional analysis going back to the work of Friedrichs and von Neumann \cite{Frie, von}.
  If an operator has a unique self-adjoint extension, then it is called \textit{essentially self-adjoint}.  A self-adjoint extension whose corresponding form is a Dirichlet form is called \textit{Markovian} and, when such an extension is unique, the operator is said to have a \textit{unique Markovian extension}.  It is clear that essential self-adjointness implies the uniqueness of Markovian extensions, but the converse is not necessarily true as can be seen by examples.

In the case of Riemannian manifolds, the (minimal) Laplacian,
whose domain is the space of smooth functions with compact support,
has Markovian extensions and generates the Brownian motion.
(The Laplacian should satisfy, in addition, the regularity property, but there
is always an equivalent operator which has this property \cite{Fuk}.)
A well-known result going back to the work of Gaffney \cite{Gaf.51, Ga} essentially states that,
on a geodesically complete manifold, the Laplacian has a
unique Markovian extension. (In \cite{Gaf.51, Ga} the so-called Gaffney Laplacian was proven to be essentially self-adjoint instead of the minimal one. The essential self-adjointness of the Gaffney Laplacian is equivalent to the uniqueness of Markovian extensions of the minimal Laplacian.  Indeed, Gaffney's result states that the condition $W_0^{1,2} = W^{1,2} $,
which is equivalent to the uniqueness of Markovian extensions of the minimal Laplacian \cite{GM},
implies the essential self-adjointness of the Gaffney Laplacian.  The converse implication was proven in \cite{Ma1}.
)
Later, it was shown that the Laplacian on a metrically complete Riemannian manifold is essentially self-adjoint \cite{Che, Str}. 
 On the other hand, if the manifold
is geodesically incomplete, the Laplacian is not essentially self-adjoint in general;
however, if the Cauchy boundary, which is the difference
between the completion
of the manifold and the manifold itself, is ``small\rq\rq{} in some sense,
then the Laplacian is essentially self-adjoint or has a unique Markov extension
 depending on how small the Cauchy boundary is
\cite{Chee, VERDIERE, GM, Kuw, LT, MASAMUNE} (see also the references within).
For strongly local regular Dirichlet forms, the
uniqueness of Silverstein extensions was proven by
Kawabata and Takeda \cite{KT} in the case when the underlying space is metrically complete  with respect to the Carnot-Caratheodori distance.
This result was extended to general regular Dirichlet forms by
Kuwae and Shiozawa \cite{KS} using the intrinsic distance defined by Frank, Lenz, and Wingert in \cite{FLW}.

Recently, there has been a tremendous amount of work devoted to the study of self-adjoint extensions of certain operators defined on graphs.  More specifically, these issues are studied for adjacency, (magnetic) Laplacian, and Schr{\"o}dinger-type operators on graphs in \cite{dVTHT, dVTHT2, Dod, Gol, Gol2, GS, HKLW, Jor, JP, KL, KL2, Kuw, Ma2, Mi, Mi2, TH, Web, Woj1, Woj2} among others.

Let us mention, in particular, the series of papers \cite{dVTHT, dVTHT2, TH} by Colin de Verdi\`{e}re, Torki-Hamza, and  Truc.  These papers give some relations between metric completeness and essential self-adjointness. However, \cite{HKLW} contains an example of a graph which is metrically complete in one of the distances studied in \cite{dVTHT} but for which the corresponding weighted Laplacian does not have a unique Markovian extension and is, therefore, not essentially self-adjoint.  One reason for this seems to be that the particular metric used in \cite{dVTHT} does not take into account the measure on the vertices of the graph.  In \cite{ Mi,Mi2}, Milatovic, following \cite{TH}, shows, with a different metric, that completeness  implies essential self-adjointness under the additional assumption of a uniform bound on the vertex degree.  \\

In this paper we investigate these questions for the weighted Laplacian on graphs. Recall that the weighted Laplacian has Markovian extensions and the associated form is one of the most important examples of a non-local Dirichlet form.
We use the notion of intrinsic distance
 introduced in \cite{FLW}
and show that, if a weighted degree function is bounded on the combinatorial neighborhood of each ball defined with respect to one such distance, then the Laplacian is essentially self-adjoint (Theorem~\ref{thm-main}).   As a direct consequence, in the locally finite case, if the graph is metrically complete in one intrinsic  path metric, then the Laplacian is essentially self-adjoint (Theorem~\ref{thm-locfinite}).  Compared to the previous results mentioned above we do not assume a uniform bound on the vertex degree and, for Theorem~\ref{thm-main}, we do not even need local finiteness. These results indicate that intrinsic metrics  give the correct notion of distance on graphs when seeking to prove statements analogous to the strongly local case.

In the metrically incomplete case, under some further assumptions, we show that if the Cauchy (or metric) boundary has finite capacity, then the Laplacian has a unique Markovian extension if and only if the Cauchy boundary is polar, that is, has zero capacity, in analogy with \cite{MASAMUNE} (Theorem~\ref{thm-locfinite2}). Moreover, we show that upper Minkowski codimension of the boundary greater than 2 implies zero capacity of the boundary (Theorem~\ref{thm-Minkowski}).  We also show by examples that the other implications do not hold. In particular, in the case when the boundary has infinite capacity, the Laplacian may be essentially self-adjoint or might fail to have a unique Markovian extension, see Examples~\ref{Ex;2.2} and~\ref{Ex;2.3}.
In general, if the Laplacian is essentially self-adjoint, then it has a unique Markovian extension, but the opposite implication is not necessarily true, see Example~\ref{Ex;2.1}. In Examples~\ref{Ex;2.4},~\ref{Ex;2.5}, and ~\ref{Ex;2.6} we discuss the case of  upper Minkowski codimension less than or equal to 2 where the boundary may be polar or non-polar.

 The paper is organized as follows.
 In Section~\ref{SU}, we introduce the set up, including background material on
 Dirichlet forms, Laplacians, intrinsic  distances, and Cauchy boundary;
 and state the main results.
  In Section~\ref{US}, we establish the triviality of square integrable eigenfunctions with
 negative eigenvalue when the weighted degree function is bounded on the combinatorial neighborhood of each ball and use this to prove Theorems~\ref{thm-main} and~\ref{thm-locfinite}.
 Section~\ref{PM} is devoted to the proofs of Theorems~\ref{thm-locfinite2} and \ref{thm-Minkowski} and Section~\ref{s:examples} is devoted to (counter-)examples.
 In Appendix~\ref{PML},
 we prove a Hopf-Rinow type property for path metrics on locally  finite graphs. This property is used in the proof of Theorems~\ref{thm-locfinite} and~\ref{thm-locfinite2}.
 We also present a series of (counter-)examples showing that the property may fail
 if the graph is not locally finite.


\section{The set up and main results} \label{SU}
\subsection{Weighted graphs}
We generally follow the setting of \cite{KL}. Let
$X$ be a countably infinite discrete set. A
function $\mu:X\to(0,\infty)$ can be viewed as a Radon measure on $X$ with full support so that $(X,\mu)$ becomes a measure space.

Let $w:X\times X\to[0,\infty)$ be symmetric, with zero diagonal, and satisfying
\begin{align*}
\sum_{y\in  X}w(x, y)<\infty \quad \textup{ for all } x\in X.
\end{align*}
The triple $(X, w, \mu)$ is called a \textit{weighted graph}.
We call $x,y\in X$ \textit{neighbors} if $w(x,y)>0$ and denote this symmetric relation
by $x\sim y$.  If each vertex has only finitely many neighbors, then the graph is called \textit{locally finite}.
For $n\geq 1$, we call a sequence of points $(x_0 ,
x_1, \ldots, x_n)$ a \textit{path} connecting $x$ and $y$ if
$x_0 =x, x_n =y, $ and $ x_i\sim x_{i+1}$ for $i = 0, 1, \ldots, n-1$.
A weighted graph $(X, w, \mu)$ is called \textit{connected}
if, for any two distinct points in $X$,
there exists a connecting path.
From now on, we only consider connected weighted graphs.

\subsection{Weighted degree and intrinsic metrics}\label{ss:adapted}
We call the function $\mathrm{Deg}:X\to[0,\infty)$ given by
$$\mathrm{Deg}(x):= \frac{1}{\mu(x)}\sum_{y\in
 X}w(x, y)$$
the \textit{weighted degree}. It is, in general, distinct from
the combinatorial degree of locally finite graphs which is given
by the number of neighbors of a vertex.

A \textit{pseudo metric} is a map $d:X\times X\to[0,\infty)$
that is symmetric, has zero diagonal and satisfies the triangle inequality. A pseudo metric $d=d_{\si}$ is called a \textit{path pseudo metric} if there is a symmetric map $\si:X\times X\to[0,\infty)$ such that $\si(x,y)>0$  if and only if $x\sim y$ and
\begin{align*}
 d_{\si}(x,y)= \inf\{l_{\si}((x_0,\ldots, x_{n})) \mid n\geq 1, (x_0,\ldots, x_n) \text{~~~~is a path connecting $x$ and $y$} \}
\end{align*}
where the \textit{length} $l_{\si}$ of a path $(x_{0},\ldots,x_{n})$ is given by
$$l_{\si}((x_{0},\ldots,x_{n}))=\sum_{i=0}^{n-1} \si(x_{i},x_{i+1}).$$
We say that a pseudo metric $d$ has \textit{jump size} $s> 0$ if, for all $x,y\in X$, $w(x,y)=0$ whenever $d(x,y)>s$.

Following Frank/Lenz/Wingert \cite{FLW} (see Lemma 4.7 and Theorem 7.3)  we make a definition which has already proven to be useful in several other problems on graphs, see Remark~\ref{rem:intrinsic} below.

\begin{defi}
We call a pseudo metric $d$ on $(X, w, \mu)$ \textit{intrinsic} if, for all $x \in X$,
\begin{equation*}\frac{1}{\mu(x)}\sum_{y\in X}w(x,y)d(x,y)^2\leq
1.\end{equation*}
An intrinsic path pseudo metric $d_\si$ is called  \textit{strongly intrinsic} if, for all $x \in X$,
\[ \frac{1}{\mu(x)}\sum_{y\in X}w(x,y)\si(x,y)^2\leq 1. \]
\end{defi}

The first example below shows that
there always exist strongly intrinsic  path pseudo metrics   with jump size $1$ on
a connected weighted graph. 

\begin{eg} \label{adapted}
(1) For $x,y\in X$ with $x\sim y$, let
$\si_{0}(x,y)=\min \{{\mathrm{Deg}}^{-\frac{1}{2}} (x), {\mathrm{Deg}}^{-\frac{1}{2}} (y), 1 \}$.
Clearly,  $d_{\sigma_{0}}$ is strongly intrinsic with jump size 1.

(2) For locally finite graphs, let $\sigma_{1}(x,y)={w(x,y)^{-\frac{1}{2}}} \min\{ \frac{\mu(x)}{\deg(x)},\frac{\mu(y)}{\deg(y)}\}^{\frac{1}{2}}$, $x,y\in X$ with $x\sim y$ where $\deg$ is the combinatorial degree, i.e., the number of neighbors.
Clearly, $d_{\si_{1}}$ is a strongly intrinsic path metric. Moreover, if $\deg \le K$ for some $K\geq1$, then $d_{\si_{1}}$ is equivalent to the metrics used in \cite{dVTHT,dVTHT2,Mi,Mi2} (in the case of no magnetic field and no potential). This seems to explain why the combinatorial vertex degree has to be bounded for these results.

(3)  Suppose that $\sigma_{N}\equiv 1$ on neighbors. Then, $d_{N}=d_{\sigma_{N}}$ gives the natural graph metric, that is, the distance between $x$ and $y$ is equal to one less than the number of points in the shortest path connecting them. Obviously, $d_{N}$ is strongly intrinsic  if and only if $\Deg\leq 1$. (Clearly, if $\Deg$ is bounded by $K>0$, then $d_{N}/\sqrt{K}$ is also a strongly intrinsic  metric.)

\end{eg}

\begin{rem}\label{rem:intrinsic} Various authors came up with such types of metrics independently of \cite{FLW}. In the context of  stochastic completeness for jump processes, see  the work of  Masamune/Uemura \cite{MU}, Grigor'yan/Huang/Masamune \cite{GHM} and also \cite{Hu, H}. Independently,   Folz \cite{FOLZ} came up with similar ideas in the context of heat kernel estimates on locally finite graphs,  see also \cite{FOLZ2,FOLZ3}. For further uses of intrinsic  metrics, see \cite{HKW}.
\end{rem}

For $x_{0}\in X$ and $r\geq 0$, we define the distance ball with respect to any pseudo metric $d$ by     $B_r(x_0):=\{x\in X\mid d(x,x_{0})\leq r\}$.

\subsection{Forms and operators}
In this article, we only consider real valued functions.
Denote by $C(X)$ the set of all functions $X\to\R$ and by $C_{c}(X)$ the subset of functions which are finitely supported. The Hilbert space $L^2(X,\mu)$ is defined in the usual way with scalar product 
\begin{align*}
    \as{u,v}:=\sum_{X}uv\mu:=\sum_{x\in X}u(x)v(x)\mu(x)
\end{align*}
and norm     $\aV{u}:=  \as{u,u}^{\frac{1}{2}}=\left(\sum_{X}u^{2}\mu \right)^{\frac{1}{2}}$.

We next introduce a discrete version of  the energy measure which can be thought of as a generalized gradient. For $f\in C(X)$ and $x\in X$ define the square of the generalized gradient by
\begin{align*}
|\nabla f|^{2}(x)&:=\sum_{y\in X}w(x,y)(f(x)-f(y))^{2},
\end{align*}
which might take the value $\infty$.
For $x\in X$, let  $D_{\mathrm{loc}}(x):=\{(f,g)\in C(X)\times C(X)\mid \sum_{y\in X}w(x,y)|f(x)-f(y)||g(x)-g(y)|<\infty\}$ and for $(f,g)\in D_{\mathrm{loc}}(x)$, we define
\begin{align*}
(\nabla f\cdot\nabla g )(x)&:=\sum_{y\in X}{w(x,y)}(f(x)-f(y)){(g(x)-g(y))}.
\end{align*}

The \textit{generalized form} $\ow Q$ is a map $C(X)\to[0,\infty]$ given by
\begin{align*}
\ow Q(f):=\frac{1}{2}\sum_{X}|\nabla f|^{2}=\frac{1}{2}\sum_{x,y\in X}w(x,y)(f(x)-f(y))^{2}
\end{align*}\
and the \textit{generalized form domain} is given by $ \ow D:=\{f\in C(X)\mid \ow Q(f)<\infty\}.$ Clearly, $C_c(X)\subseteq \ow D$.  
By polarization, this gives a sesqui-linear form $\ow Q:\ow D\times\ow D\to\R$ as follows
\begin{align*}
\ow Q(f,g)=\frac{1}{2}\sum_{X}(\nabla f\cdot\nabla g)=\frac{1}{2}\sum_{x,y\in X}w(x,y)(f(x)-f(y))(g(x)-g(y)).
\end{align*}

In this context, there are two distinguished restrictions of the generalized form.  Let $Q$ be the restriction of $\ow Q$ to
\[ D(Q):=\overline{C_{c}(X)}^{\aV{\cdot}_{\widetilde{Q}}} \mbox{ where } \aV{\cdot}_{\widetilde{Q}}:=(\ow Q(\cdot)+\aV{\cdot}^2)^{\frac{1}{2}}. \]
The form $(Q,D(Q))$ is then a regular Dirichlet form, see \cite{KL}.
Furthermore, let $\qn$ be the restriction of $\ow Q$ to
\[ D(\qn):=\{f \in L^{2}(X,\mu) \mid \ow Q(f)<\infty\}. \]
The form $(\qn, D(\qn))$ is a Dirichlet form but it is not regular in general.
For more discussion of these two forms and the associated self-adjoint operators in our context, see  \cite{HKLW}. \\

The \textit{formal Laplacian} $\Delta$ can be introduced on $(X, w, \mu)$ as an analogue of the classical
Laplace-Beltrami operator on Riemannian manifolds as follows
$$(\Delta f)(x)=\frac{1}{\mu(x)}\sum_{y\in X}w(x,y)(f(x)-f(y)),$$
with domain
$F=\{f \in C(X) \mid \sum_{y\in V}w(x,y)|f(y)|<\infty\mbox{ for all }x\in X\}.$
Taking into account $\sum_{y}w(x,y)<\infty$, $x\in X$, the operator $\Delta$ is defined pointwise.
It is easy to see that $F$ is stable under multiplication by bounded functions on $X$.  
It can be shown that the self-adjoint operator $L$ with domain $D(L)$ corresponding to $Q$ is non-negative and is a restriction of $\Delta$, see \cite[Theorem~9]{KL}.   That is,
\begin{align*}
    Lu=\Delta u,\qquad u\in D(L).
\end{align*}

\subsection{Main results}\label{ss:main}

As discussed in the introduction, it is a classical result that the Laplacian on a weighted manifold is essentially self-adjoint
if all geodesic balls are relatively compact which is equivalent to the manifold being metrically complete (see, for example, Theorem~11.5 in \cite{GRIBOOK}).   Here we present some counterparts for weighted graphs.

We define the \textit{combinatorial neighborhood} $\n(K)$ of a subset $K$ of $X$ by
\[\n(K)=\{x\in X \mid x\in K \text{~~or there exists~~} y\in K \text{~~such that~~} x\sim y \}.\]
Note that the combinatorial neighborhood is not a topological notion and can be understood as the distance one ball about $K$ with respect to the natural graph distance.

\begin{theorem}
  \label{thm-main}{Let $(X,w,\mu)$ be a weighted graph and let $d$ be an intrinsic pseudo metric.  If the weighted degree function $\Deg$ is bounded on the combinatorial neighborhood of each distance ball, then
\begin{align*}
D(Q)&=D(\qn), \\
D(L)&=\{u\in L^{2}(X,\mu)\cap F\mid \Delta u\in L^{2}(X,\mu)\}.
\end{align*}
In particular, if additionally $\Delta C_{c}(X)\subseteq L^{2}(X,\mu)$, then $L_c = \Delta \vert_{C_c(X)}$ is essentially self-adjoint.}
\end{theorem}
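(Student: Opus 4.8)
The plan is to derive all three conclusions from a single vanishing statement for eigenfunctions: under the hypotheses of the theorem, if $u \in L^2(X,\mu) \cap F$ satisfies $\Delta u = -\lambda u$ for some $\lambda > 0$, then $u \equiv 0$. (This is the statement to be established in Section~\ref{US}.) To prove it, I would fix $x_0 \in X$ and use the cutoff functions $\eta_k := \big(2 - \tfrac1k d(\cdot, x_0)\big)_+ \wedge 1$, which satisfy $0 \le \eta_k \le 1$, $\eta_k \equiv 1$ on $B_k(x_0)$, $\supp \eta_k \subseteq B_{2k}(x_0)$, and are $\tfrac1k$-Lipschitz with respect to $d$; the last property combined with the defining inequality of an intrinsic metric yields $\sum_y w(x,y)(\eta_k(x) - \eta_k(y))^2 \le \tfrac{1}{k^2}\mu(x)$ for every $x \in X$. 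Testing the eigenvalue equation against $\eta_k^2 u$, using the Green-type identity $\langle \Delta u, \eta_k^2 u\rangle = \widetilde Q(u, \eta_k^2 u)$, the elementary pointwise inequality $(a-b)(s^2 a - t^2 b) \ge (sa - tb)^2 - \tfrac12(s-t)^2(a^2+b^2)$ (with $a = u(x)$, $b = u(y)$, $s = \eta_k(x)$, $t = \eta_k(y)$), and discarding the nonnegative term $\widetilde Q(\eta_k u)$, one arrives at
\[
\lambda \|\eta_k u\|^2 = -\langle \Delta u, \eta_k^2 u\rangle \le \frac12 \sum_{x \in X} u(x)^2 \sum_{y \in X} w(x,y)(\eta_k(x) - \eta_k(y))^2 \le \frac{1}{2k^2}\|u\|^2 .
\]
Since $\|\eta_k u\| \to \|u\|$ as $k \to \infty$ by monotone convergence, this forces $u \equiv 0$.

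Granting the vanishing statement, the identity $D(L) = \{u \in L^2(X,\mu) \cap F \mid \Delta u \in L^2(X,\mu)\} =: D(\widetilde L)$ follows quickly: the inclusion $L \subseteq \widetilde L$ holds because $D(L) \subseteq D(Q) \subseteq \widetilde D \subseteq F$ and $L$ restricts $\Delta$, while conversely, for $u \in D(\widetilde L)$ we set $f := \Delta u + u \in L^2(X,\mu)$ and $v := (L+1)^{-1} f \in D(L)$ (the resolvent exists since $L \ge 0$ is self-adjoint), so that $w := u - v \in L^2(X,\mu) \cap F$ satisfies $(\Delta + 1)w = 0$ and hence $w \equiv 0$, i.e. $u = v \in D(L)$. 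The other two conclusions are then formal. The self-adjoint operator $L^{(N)}$ associated with $\qn$ is a restriction of $\Delta$ (Green's formula against $C_c(X)$ gives $D(L^{(N)}) \subseteq D(\widetilde L)$ with $L^{(N)} = \Delta$ there), so $L^{(N)} \subseteq \widetilde L = L$; two self-adjoint operators in such an inclusion coincide, hence $L^{(N)} = L$, and since a self-adjoint operator determines its closed form uniquely we get $\qn = Q$, in particular $D(Q) = D(\qn)$. Finally, when $\Delta C_c(X) \subseteq L^2(X,\mu)$ the operator $L_c = \Delta\vert_{C_c(X)}$ is densely defined and symmetric (Green's formula again), and a direct computation testing against the functions $\delta_x$ identifies $L_c^* = \widetilde L$; since $\widetilde L = L$ is self-adjoint, $L_c$ is essentially self-adjoint.

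The resolvent and adjoint bookkeeping in the second paragraph is routine. The one genuinely delicate point — and the main obstacle — is the Green-type identity $\langle \Delta u, \eta_k^2 u\rangle = \widetilde Q(u, \eta_k^2 u)$ used in the vanishing statement. If all distance balls were finite this would be immediate, since then $\eta_k^2 u \in C_c(X)$; but under the hypotheses of the theorem the balls $B_{2k}(x_0)$ may be infinite (intrinsic metrics need not have finite jump size, and graphs need not be locally finite) and $\widetilde Q(u)$ may itself be infinite, so the identity must be obtained by approximating $\eta_k^2 u$ by its restrictions to an exhaustion by finite sets and passing to the limit. The boundedness of $\Deg$ on the combinatorial neighbourhood of $B_{2k}(x_0)$ is precisely what makes this work: it dominates the only sums that occur, such as $\sum_{x \in B_{2k}(x_0)} \sum_{y \in X} w(x,y) u(y)^2 \le \big(\sup_{\n(B_{2k}(x_0))} \Deg\big) \|u\|^2 < \infty$, which simultaneously places $\eta_k^2 u$ in the generalized form domain $\widetilde D$ and furnishes the dominating function for the limit. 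This is the step where the hypothesis of the theorem is genuinely used.
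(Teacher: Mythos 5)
Your proposal is correct and follows essentially the same route as the paper: the heart of the matter is the triviality of $L^2\cap F$ solutions of $(\Delta+\lambda)u=0$, proved by a Caccioppoli-type estimate with intrinsic-metric Lipschitz cutoffs, where the hypothesis that $\Deg$ is bounded on the combinatorial neighbourhood of each ball is exactly what justifies the Green/Leibniz identities (the paper's Lemmas~\ref{l:fubini}--\ref{l:key}, which you compress into one pointwise inequality plus a domination argument), and the remaining identifications of $D(L)$, $D(L_c^*)$ and the resolvent argument are the same. The only cosmetic difference is that you obtain $D(Q)=D(\qn)$ by showing the Neumann operator is a restriction of $\Delta$ and hence coincides with $L$, rather than invoking \cite[Corollary~4.3]{HKLW}; this is a valid, self-contained substitute for that citation.
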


\begin{rem}
(a) The result on essential self-adjointness is sharp by Example~\ref{Ex;2.1} in Section \ref{s:examples}.\\
(b) Let us note that the theorem does not assume that $\Deg$ is bounded on $X$. This would imply that $Q$ is bounded and the statements become trivial. \\
(c)  The condition $\Delta C_{c}(X)\subseteq L^{2}(X,\mu)$ holds if and only if $w(x, \cdot) / \mu(\cdot) \in L^2(X,\mu)$ for all $x \in X$, see Proposition 3.3 in \cite{KL}.  In particular, this always holds in the locally finite case.
\end{rem}

If a pseudo metric has finite jump size, the combinatorial neighborhood of a distance ball is contained in another distance ball. This yields the following immediate consequence.

\begin{cor}  If $(X,w,\mu)$ is a weighted graph and $d$ an intrinsic pseudo metric  with finite jump size such that  each distance ball is finite,
then the statements of Theorem~\ref{thm-main} hold.
\end{cor}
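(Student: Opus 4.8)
The plan is to obtain the Corollary as an immediate consequence of Theorem~\ref{thm-main}: under the stated hypotheses one checks that the weighted degree $\Deg$ is automatically bounded on the combinatorial neighborhood of every distance ball, which is precisely the assumption of that theorem. Let $s>0$ be the jump size of $d$, so that $w(x,y)=0$ whenever $d(x,y)>s$; equivalently, $x\sim y$ forces $d(x,y)\le s$. (The direction of this implication is the one point worth stating carefully.)

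First I would verify the inclusion $\n(B_r(x_0))\subseteq B_{r+s}(x_0)$ for every $x_0\in X$ and every $r\ge 0$. Indeed, if $x\in \n(B_r(x_0))$ then either $x\in B_r(x_0)\subseteq B_{r+s}(x_0)$, or there is some $y\in B_r(x_0)$ with $x\sim y$; in the latter case the finite jump size gives $d(x,y)\le s$, and the triangle inequality yields $d(x,x_0)\le d(x,y)+d(y,x_0)\le s+r$, so again $x\in B_{r+s}(x_0)$. Since every distance ball is finite by assumption, $B_{r+s}(x_0)$ is a finite set, and hence so is its subset $\n(B_r(x_0))$. A nonnegative function on a finite set is bounded, so $\Deg$ is bounded on $\n(B_r(x_0))$. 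As $x_0$ and $r$ were arbitrary, the hypothesis of Theorem~\ref{thm-main} is met, and all of its conclusions follow: $D(Q)=D(\qn)$, the explicit description of $D(L)$, and, under the additional condition $\Delta C_c(X)\subseteq L^2(X,\mu)$, the essential self-adjointness of $L_c=\Delta\vert_{C_c(X)}$.

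There is essentially no real obstacle here; the argument is purely a matter of unwinding definitions. The only subtleties are the bookkeeping around the jump-size condition (that $w(x,y)>0$ implies $d(x,y)\le s$, not the converse) and the elementary remark that finiteness of a set forces boundedness of any $[0,\infty)$-valued function on it. No compactness or metric-completeness input is needed, since finiteness of balls is strictly stronger than what Theorem~\ref{thm-main} actually requires.
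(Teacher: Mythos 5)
Your argument is correct and matches the paper's own reasoning: the finite jump size gives $\n(B_r(x_0))\subseteq B_{r+s}(x_0)$, so each combinatorial neighborhood of a ball is finite and $\Deg$ is bounded there, which is exactly the hypothesis of Theorem~\ref{thm-main}. Nothing is missing.
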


Note that finite balls and finite jump size imply that the graph is locally finite.
In this case, path pseudo metrics are metrics and the analogy to Riemannian manifolds becomes even more obvious as can be seen below.

Recall that an extension of $L_c= \Delta \vert_{C_c(X)}$ is said to be \textit{Markovian}, if the form associated to it is a Dirichlet form.  In particular, the operators associated to  $Q$ and $\qn$ are Markovian and, in the locally finite case, having a unique Markovian extension is equivalent to $D(Q) = D(\qn)$ \cite[Theorem 5.2]{HKLW}.

\begin{theorem}\label{thm-locfinite}
If $(X,w,\mu)$ is a locally finite weighted graph and  $d=d_\si$ an intrinsic path metric  such that $(X,d)$ is metrically complete, then
\[ D(L)=\{u\in L^{2}(X,\mu) \mid \Delta u\in L^{2}(X,\mu)\}, \]
$L_c = \Delta \vert_{C_c(X)}$ is essentially self-adjoint and has a unique Markovian extension.
\end{theorem}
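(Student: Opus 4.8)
The plan is to deduce this from Theorem~\ref{thm-main}, whose hypotheses I must verify in the present setting. Concretely, the only thing that needs checking is that the weighted degree $\Deg$ is bounded on the combinatorial neighborhood of every distance ball; granting this, Theorem~\ref{thm-main} immediately supplies $D(Q)=D(\qn)$ and $D(L)=\{u\in L^2(X,\mu)\cap F\mid \Delta u\in L^2(X,\mu)\}$, and since local finiteness forces $F=C(X)$, this is exactly the asserted description of $D(L)$.

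The key step is to pass from metric completeness to finiteness of distance balls. Here I would invoke the Hopf--Rinow type property for path metrics on locally finite graphs established in Appendix~\ref{PML}: since $(X,w,\mu)$ is locally finite, $d=d_\si$ is a path metric (which on a locally finite graph is automatically a genuine metric), and $(X,d)$ is metrically complete, every distance ball $B_r(x_0)$ is a \emph{finite} subset of $X$. Local finiteness is essential at this point: a path of bounded $\si$-length may a priori meet infinitely many vertices, so completeness by itself need not bound balls. Once $B_r(x_0)$ is finite, local finiteness makes its combinatorial neighborhood $\n(B_r(x_0))$ --- the union of $B_r(x_0)$ with the finitely many neighbors of its finitely many points --- finite as well, and a nonnegative function on a finite set is bounded; hence $\Deg$ is bounded on $\n(B_r(x_0))$ for all $x_0\in X$ and $r\geq 0$, which is precisely the hypothesis of Theorem~\ref{thm-main} (the metric $d$ being intrinsic by assumption).

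It remains to handle the last two assertions. Since the graph is locally finite we have $\Delta C_c(X)\subseteq L^2(X,\mu)$ by Remark (c) following Theorem~\ref{thm-main}, so the final clause of that theorem gives that $L_c=\Delta\vert_{C_c(X)}$ is essentially self-adjoint; and essential self-adjointness implies the uniqueness of Markovian extensions, as observed in the introduction --- alternatively this can be read directly off $D(Q)=D(\qn)$ together with \cite[Theorem 5.2]{HKLW} in the locally finite case. The main obstacle is entirely concentrated in the first step, namely the Hopf--Rinow statement of Appendix~\ref{PML}; everything after that is routine bookkeeping reducing the claim to Theorem~\ref{thm-main}.
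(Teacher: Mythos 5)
Your proposal is correct and follows essentially the same route as the paper's own proof: metric completeness plus local finiteness gives finite distance balls via the Hopf--Rinow type result (Theorem~\ref{t:locfinite}), hence finite combinatorial neighborhoods on which $\Deg$ is bounded, and then Theorem~\ref{thm-main} together with $\Delta C_c(X)\subseteq L^2(X,\mu)$ and the equivalence of Markov uniqueness with $D(Q)=D(\qn)$ in the locally finite case (\cite{HKLW}) yields all three assertions. Your additional observation that local finiteness forces $F=C(X)$, justifying the absence of the intersection with $F$ in the stated domain of $L$, is a detail the paper leaves implicit and is handled correctly.
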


\smallskip

Next, we turn to the metrically incomplete case where we will prove an analogue to results found in \cite{MASAMUNE}, see Theorem~\ref{thm-locfinite2} below.  In order to avoid some topological issues when defining the capacity, we now assume that all graphs are locally finite and that we only deal with path metrics. Note that by (a) of Lemma \ref{l:locfinite}, the topology induced by a path metric is discrete in the locally finite case.

For a set $U\subseteq X$ define the \textit{capacity} of $U$ by
\begin{align*}
    \mathrm{Cap}(U):=\inf\{\|u\|_{\widetilde{Q}}\mid u\in D(Q^{\max}),\; 1_{U}\leq u\},
\end{align*}
where $1_{U}$ is the characteristic function of $U$ and $\inf\emptyset=\infty$. For a path metric $d$ on $X$ we let $(\ov X,\ov d)$ be the metric completion.
We define the \textit{Cauchy boundary} $\cb$ of $X$ to be the difference between
$\overline X$ and $X$:
\[ \cb := \overline X \setminus X. \]
Clearly, $X$ is metrically complete if and only if $\cb$ is empty. For $A\subseteq\overline X$ define
\begin{align*}
    \mathrm{Cap}(A):=\inf\{\mathrm{Cap} (O\cap X)\mid A\subseteq O \mbox{ with } O \subseteq \ov X\,\mbox{open}\}.
\end{align*}
Note that, for $U \subseteq X$, the definitions of capacity agree due to the local finiteness and the use of path metrics.
We say that $\cb$ is \textit{polar} if $\mathrm{Cap}(\cb)=0$.\medskip

\begin{theorem}\label{thm-locfinite2}
Let $(X,w,\mu)$ be a locally finite weighted graph and  let $d=d_{\sigma}$ be a strongly intrinsic path metric.
If $(X,d)$ is not metrically complete and ${\rm Cap}(\cb) < \infty$, then
the Cauchy boundary is polar if and only if
\[D(Q)=D(\qn), \]
that is, if and only if $L_c = \Delta \vert_{C_c(X)}$ has a unique Markovian extension.
\end{theorem}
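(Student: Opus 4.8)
The plan is to prove the two implications separately. Since $D(Q)\subseteq D(\qn)$ always, only one inclusion (resp.\ one capacity estimate) needs work in each direction. A preliminary reduction I would use throughout: the truncations $f_N:=(f\wedge N)\vee(-N)$ of $f\in D(\qn)$ converge to $f$ in $\aV{\cdot}_{\widetilde{Q}}$, because $t\mapsto t-((t\wedge N)\vee(-N))$ is a normal contraction (so $\ow Q(f-f_N)\le\ow Q(f)$ and $\aV{f-f_N}\le\aV{f}$) and $\ow Q(f-f_N)\to0$, $\aV{f-f_N}\to0$ by dominated convergence, $\bigl(f(x)-f(y)\bigr)^2$ being $w$-summable for $f\in D(\qn)$. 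Thus in the ``polar $\Rightarrow$ equal'' direction it suffices to approximate bounded $f$ by elements of $C_c(X)$ in $\aV{\cdot}_{\widetilde{Q}}$.

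\emph{Polar $\Rightarrow D(Q)=D(\qn)$.} Let $f\in D(\qn)$ be bounded. Using $\mathrm{Cap}(\cb)=0$, choose open $O_n\subseteq\ov X$ with $\cb\subseteq O_n$ and $v_n\in D(\qn)$ with $1_{O_n\cap X}\le v_n$ and $\aV{v_n}_{\widetilde{Q}}\to0$; replacing $v_n$ by $(v_n)_+\wedge1$, assume $0\le v_n\le1$ and (passing to a subsequence) $v_n\to0$ pointwise. The Leibniz-type bound
\[ \ow Q(fv_n)\le 2\aV{f}_\infty^2\,\ow Q(v_n)+\sum_{x,y\in X}w(x,y)\,v_n(y)^2\bigl(f(x)-f(y)\bigr)^2 \]
together with dominated convergence gives $\aV{fv_n}_{\widetilde{Q}}\to0$, so $g_n:=f(1-v_n)\to f$ in $\aV{\cdot}_{\widetilde{Q}}$. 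Since $g_n$ vanishes on $O_n\cap X$, its support lies in $Y_n:=\ov X\setminus O_n$, which is closed in $\ov X$ and contained in $X$. I would then cut $g_n$ off at infinity: for fixed $x_0\in X$ put $\eta_R:=(1-d(\cdot,x_0)/R)_+$, so that $|\eta_R(x)-\eta_R(y)|\le d(x,y)/R\le\si(x,y)/R$ for $x\sim y$, and hence
\[ \sum_{x,y\in X}w(x,y)g_n(x)^2\bigl(\eta_R(x)-\eta_R(y)\bigr)^2\le\frac1{R^2}\sum_{x\in X}g_n(x)^2\sum_{y\in X}w(x,y)\si(x,y)^2\le\frac1{R^2}\aV{g_n}^2 \]
by strong intrinsicality; a further dominated-convergence argument then yields $g_n\eta_R\to g_n$ in $\aV{\cdot}_{\widetilde{Q}}$ as $R\to\infty$. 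Now $\supp(g_n\eta_R)\subseteq Y_n\cap B_R(x_0)$ is $d$-bounded, and its closure in $\ov X$ lies in $Y_n\subseteq X$, hence is disjoint from $\cb$; by the Hopf--Rinow type property of the appendix (and Lemma~\ref{l:locfinite}(a), so that $X$ is discrete in $\ov X$) this set is finite, i.e.\ $g_n\eta_R\in C_c(X)$. A diagonal choice $R=R_n$ gives $f\in D(Q)$.

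\emph{$D(Q)=D(\qn)\Rightarrow$ polar.} Here I would argue directly with capacities. As $\mathrm{Cap}(\cb)<\infty$, fix an open $O_0\supseteq\cb$ and $u\in D(\qn)$ with $1_{O_0\cap X}\le u$; truncating, take $0\le u\le1$. By hypothesis $u\in D(Q)$, so there are $\varphi_n\in C_c(X)$ with $\aV{\varphi_n-u}_{\widetilde{Q}}\to0$. For each $n$ the set $\supp\varphi_n$ is finite, hence a closed subset of $\ov X$ disjoint from the closed set $\cb$, so one can pick an open $O_n$ with $\cb\subseteq O_n\subseteq O_0$ and $O_n\cap\supp\varphi_n=\emptyset$. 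On $O_n\cap X$ we then have $u-\varphi_n=u\ge1$, so $(u-\varphi_n)_+\in D(\qn)$ is admissible for $\mathrm{Cap}(O_n\cap X)$, and therefore
\[ \mathrm{Cap}(\cb)\le\mathrm{Cap}(O_n\cap X)\le\aV{(u-\varphi_n)_+}_{\widetilde{Q}}\le\aV{u-\varphi_n}_{\widetilde{Q}}\longrightarrow0\quad(n\to\infty). \]
Hence $\mathrm{Cap}(\cb)=0$.

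The first implication is where I expect the real work: one must coordinate two cut-offs, since $1-v_n$ removes the mass near $\cb$ but may leave an unbounded support, while $\eta_R$ handles infinity; it is precisely the appendix's Hopf--Rinow property (finiteness of $d$-bounded subsets of $X$ whose $\ov X$-closure avoids $\cb$) that makes the combined support genuinely finite. The two limiting estimates are routine once one observes that the cross term $\sum w(x,y)v_n(y)^2(f(x)-f(y))^2$ is killed by dominated convergence and that $\sum w(x,y)g_n(x)^2(\eta_R(x)-\eta_R(y))^2$ is controlled by the strongly intrinsic condition. The second implication, by contrast, is short: the only idea is to test $\mathrm{Cap}(O_n\cap X)$ against $(u-\varphi_n)_+$.
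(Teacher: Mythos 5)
Your second implication ($D(Q)=D(\qn)\Rightarrow$ polarity) is correct and is in substance the paper's boundary alternative (Lemma~\ref{l:boundary}): your explicit choice of open sets $O_n$ disjoint from $\supp\varphi_n$ plays the role of the paper's competitor $(e-e_n)_+\wedge 1$. Likewise the reduction to bounded functions and the estimate $\aV{fv_n}_{\widetilde{Q}}\to 0$ reproduce Lemma~\ref{l:equpot}. The gap is in the first implication, at the single step on which everything hinges: the claim that $\supp(g_n\eta_R)\subseteq Y_n\cap B_R(x_0)$ is finite because it is $d$-bounded and its closure in $\ov X$ avoids $\cb$. Theorem~\ref{t:locfinite} does not give this: it equates finiteness of distance balls with completeness of the metric space to which it is applied, and here $(X,d)$ is incomplete by hypothesis, so $d$-balls may well be infinite; boundedness together with ``closure disjoint from $\cb$'' does not force finiteness, because a uniformly separated infinite set has no accumulation points at all. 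A concrete counterexample satisfying all hypotheses of the theorem: take the comb graph $X=\{x_k\}_{k\geq 0}\cup\{y_k\}_{k\geq0}$ with $x_k\sim x_{k+1}$, $\sigma(x_k,x_{k+1})=2^{-k}$, $w(x_k,x_{k+1})=1$, and pendant edges $y_k\sim x_k$ with $\sigma(x_k,y_k)=1$, $w(x_k,y_k)=4^{-k}$, and $\mu(x_k)=6\cdot4^{-k}$, $\mu(y_k)=4^{-k}$. Then $d_\sigma$ is strongly intrinsic, $\mu(X)<\infty$ so $\mathrm{Cap}(\cb)<\infty$, and $\cb$ is the single point $p=\lim_k x_k$; yet $\{y_k\}_{k\geq0}$ is an infinite set contained in $B_3(x_0)$ with pairwise distances $\geq 2$, whose closure in $\ov X$ misses $\cb$. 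Taking $O_n$ a small neighbourhood of $p$ and a bounded $f\in D(\qn)$ charging all the $y_k$, your $g_n\eta_R$ has infinite support, so it is not in $C_c(X)$ and the approximation collapses exactly where you flagged ``the real work''.

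This is precisely why the paper's Lemma~\ref{l:subgraphs} does not cut off with the ambient metric $d$ but passes to the subgraph $Y=X\setminus O$ equipped with the induced path metric $d_Y=d_{\sigma_Y}\geq d$ (this is where strong intrinsicality is used, so that the restricted weights still make $d_Y$ intrinsic for $(Y,w_Y,\mu_Y)$). One first shows $(Y,d_Y)$ is metrically complete -- a $d_Y$-Cauchy sequence cannot accumulate at $\cb\subseteq O$, hence is eventually constant by discreteness -- and only then invokes Theorem~\ref{t:locfinite}, applied to the complete space $(Y,d_Y)$, to conclude that $d_Y$-balls are finite; the cut-off $\eta_R$ is built from $d_Y$, not from $d$. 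In the comb example the vertices $y_k$ with $x_k\in O$ become singleton components of $Y$, at infinite $d_Y$-distance from $x_0$, so the $d_Y$-cut-off correctly ignores them, whereas your $d$-ball does not. The price is that one must separately control the edges running from $Y$ into $O\cap X$ -- this is done using that the equilibrium potential equals $1$ on $O\cap X$, so that $\sum_{x\in Y}\sum_{y\in O\cap X}w(x,y)(1-e(x))^2\leq \ow{Q}(e)<\infty$, which is why the factor $(1-e)$ (rather than a generic admissible $1-v_n$) is kept in the function being approximated -- and handle the possible disconnectedness of $Y$ by a diagonal argument over its components. Without some substitute for this subgraph/completeness mechanism, the ``polar $\Rightarrow D(Q)=D(\qn)$'' half of your argument does not go through.
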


Note that the other consequences of Theorem~\ref{thm-main} do not necessarily follow if $(X,d)$ is metrically incomplete with polar boundary.  Example~\ref{Ex;2.1} in Section~\ref{s:examples} contains a weighted graph with polar boundary but where $L_{c}$ has non-Markovian self-adjoint extensions.  Also, in the case when the Cauchy boundary has infinite capacity, the Laplacian may be essentially self-adjoint or may not have a unique Markovian extension, see Examples~\ref{Ex;2.2}  and~\ref{Ex;2.3}.

\smallskip

Next we turn to a criterion which connects polarity of the boundary to co-dimension of the boundary.
The \textit{upper Minkowski codimension} $\cd(\cb)$ of $\cb$ is defined as
\[ \cd(\cb) = \limsup_{r \to 0} \frac{\ln \mu( B_r(\cb))} {\ln r}, \]
where $ B_r(\cb)=\{x\in X\mid \inf_{b\in \cb}\overline d(x,b)\leq r\}$. 
The upper Minkowski codimension (or box counting codimension) is one of the most
studied fractal dimensions. The relationships between various dimensions in the classical setting can be found in \cite{Fal}.

\medskip

\begin{theorem}\label{thm-Minkowski} Let $(X,w,\mu)$ be a locally finite weighted graph and let $d=d_\si$ be an intrinsic path metric.
If $\cd(\cb) > 2$, then $\cb$ is polar.
\end{theorem}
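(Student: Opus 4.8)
The plan is to construct, for each small $r>0$, an explicit cutoff function $u_r\in D(\qn)$ with $u_r\geq 1$ on a neighborhood of the Cauchy boundary $\cb$ in $\ov X$ and with $\|u_r\|_{\widetilde Q}^2\to 0$ as $r\to 0$ along a suitable sequence; by the definition of $\cp(\cb)$ this forces $\cp(\cb)=0$. The natural candidate is a function of the distance to the boundary, namely $u_r := \varphi_r\circ \rho$ where $\rho(x):=\inf_{b\in\cb}\ov d(x,b)$ and $\varphi_r:[0,\infty)\to[0,1]$ is a Lipschitz ``tent'': $\varphi_r\equiv 1$ on $[0,r]$, $\varphi_r\equiv 0$ on $[2r,\infty)$ (or perhaps $[r^2,\infty)$—the exact scaling is part of the optimization), and linear in between. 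Then $u_r$ is supported in $B_{2r}(\cb)\cap X$, equals $1$ on $B_r(\cb)$, hence is admissible in the infimum defining $\cp(B_r(\cb))\geq \cp(\cb)$ after passing to the open neighborhood; note $u_r$ has finite support issues are irrelevant since we only need $u_r\in D(\qn)$, which holds because $\ow Q(u_r)<\infty$ as shown next and $u_r$ is bounded with support of finite measure (finite by local finiteness of balls, or at least of finite measure).

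The key estimate is the bound on $\ow Q(u_r)$. Since $\rho$ is $1$-Lipschitz with respect to $\ov d$ and $d$ is an intrinsic path metric, for neighbors $x\sim y$ we have $|u_r(x)-u_r(y)|\leq \mathrm{Lip}(\varphi_r)\,|\rho(x)-\rho(y)|\leq r^{-1} d(x,y)$ (using $\mathrm{Lip}(\varphi_r)\sim r^{-1}$), so that
\begin{align*}
\ow Q(u_r)=\tfrac12\sum_{x,y}w(x,y)(u_r(x)-u_r(y))^2 \leq \frac{1}{2r^2}\sum_{x\in\supp\nabla u_r}\sum_{y\in X}w(x,y)d(x,y)^2 \leq \frac{1}{2r^2}\,\mu\big(B_{2r}(\cb)\big),
\end{align*}
where the last inequality uses the defining inequality for an intrinsic metric, $\frac{1}{\mu(x)}\sum_y w(x,y)d(x,y)^2\leq 1$, and the fact that $\nabla u_r$ is supported in $B_{2r}(\cb)$ (more precisely in the combinatorial neighborhood, but one absorbs the jump by enlarging the radius slightly—here it is cleanest to take $\varphi_r$ supported on a slightly larger set and note the edges leaving $B_{2r}(\cb)$ still have both endpoints within bounded distance, or simply bound by $\mu(B_{Cr}(\cb))$ for a fixed constant $C$). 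Also $\|u_r\|^2\leq \mu(B_{2r}(\cb))$, which is lower order. Hence $\|u_r\|_{\widetilde Q}^2\leq (r^{-2}+1)\mu(B_{Cr}(\cb))$.

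Now invoke the hypothesis $\cd(\cb)>2$: by definition of the limsup there exist $\alpha>2$ and a sequence $r_n\to 0$ with $\mu(B_{r_n}(\cb))\leq r_n^{\alpha}$ (after adjusting the constant $C$, which only shifts things by $C^\alpha$). Plugging in gives $\|u_{r_n}\|_{\widetilde Q}^2\leq C'\,r_n^{\alpha-2}\to 0$, whence $\cp(\cb)=0$, i.e.\ $\cb$ is polar. The main obstacle I anticipate is bookkeeping around the jump size / combinatorial neighborhood: the gradient of $u_r$ is nonzero on edges with at least one endpoint in $\{r<\rho\le 2r\}$, but such an edge may reach a vertex with $\rho$ somewhat larger than $2r$; one must check that all relevant vertices lie in $B_{Cr}(\cb)$ for a fixed $C$ so that the Minkowski bound applies. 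For an intrinsic path metric this is controlled because a single edge has $d$-length at most... well, possibly unbounded, so one should instead argue that it is the vertices $x$ \emph{in} the annulus $\{r<\rho\le 2r\}$ that carry the sum $\sum_y w(x,y)d(x,y)^2\le\mu(x)$, and the term is attributed to $x$; thus the whole energy is bounded by $\tfrac{1}{2r^2}\mu(\{\rho\le 2r\})=\tfrac{1}{2r^2}\mu(B_{2r}(\cb))$ with $C=2$, cleanly. A secondary point is verifying $u_r\in D(\qn)$ rather than merely $\ow D$: this needs $u_r\in L^2$, which follows once $\mu(B_{2r}(\cb))<\infty$; if some annulus has infinite measure the Minkowski codimension hypothesis already fails for that $r$, so along the sequence $r_n$ we are fine.
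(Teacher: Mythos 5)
Your proposal is correct and follows essentially the same route as the paper: a tent cutoff of the distance-to-boundary function, the intrinsic-metric inequality attributed to vertices in the outer ball giving $\ow Q(u_r)\le \mu(B_{2r}(\cb))/r^{2}$ (the factor $\tfrac12$ you keep is off by a harmless constant), plus $\|u_r\|^2\le\mu(B_{2r}(\cb))$, and then the codimension hypothesis along a sequence $r_n\to 0$. The one bookkeeping point to fix is that the bound $\mu(B_{r_n}(\cb))\le r_n^{\alpha}$ holds only along the chosen sequence, so you cannot simply ``adjust the constant $C$'' to control $\mu(B_{Cr_n}(\cb))$; instead take the outer radius of the cutoff equal to $r_n$ itself (apply your construction with $r=r_n/2$, exactly as the paper does with $R=r_n/2$), after which the estimate $\|u\|_{\ow Q}^{2}\le r_n^{\alpha}+4r_n^{\alpha-2}\to 0$ gives polarity.
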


In Examples~\ref{Ex;2.4}, \ref{Ex;2.5}, and \ref{Ex;2.6} we show that $\cb$ can be polar or non-polar if $\cd(\cb)\leq2.$  See the recent paper \cite{GM} for some related statements in the case of manifolds.


\section{Uniqueness of solutions} \label{US}
Let $(X,w,\mu)$ be a weighted graph and let $d$ be an intrinsic pseudo metric.
 In this section we will show that under the assumption that $\mathrm{Deg}$ is bounded on the combinatorial neighborhood of each distance ball, there are no non-trivial $L^2$ solutions to $(\Delta+\lm)u=0$ for $\lm>0$. From this fact we can infer Theorems~\ref{thm-main} and ~\ref{thm-locfinite}.


\subsection{Leibniz rule and Green's formula}
The following auxiliary lemmas are well known in various other situations, see \cite{FLW,HK,HKLW,Jor,JP,KL,KL2}. However, they do not hold on graphs without further assumptions. Here, we prove them under the assumption that the weighted vertex degree is bounded on certain subsets.

\begin{lem}\label{l:fubini}
Let $B\subseteq X$ be such that $\Deg$ is bounded on $B$. Then, for all $u,v\in L^{2}(X,\mu)$,
\begin{align*}
\sum_{x,y\in B}w(x,y)|u(x)v(x)|<\infty\quad\mbox{ and } \quad \sum_{x,y\in B} w(x,y) | u(x)v(y)|<\infty.
\end{align*}
\end{lem}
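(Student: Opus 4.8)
The plan is to exploit the definition of the weighted degree together with elementary Cauchy--Schwarz estimates, using only the finiteness and symmetry properties already built into $w$ and $\mathrm{Deg}$.

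Let me denote by $C$ a bound for $\mathrm{Deg}$ on $B$, so that $\sum_{y\in X}w(x,y)\leq C\mu(x)$ for every $x\in B$. For the first sum, I would simply fix $x$, sum over $y\in B$ first, and use that the partial sum is dominated by the full sum over $y\in X$:
\begin{align*}
\sum_{x,y\in B}w(x,y)|u(x)v(x)|
&\leq \sum_{x\in B}|u(x)v(x)|\sum_{y\in X}w(x,y)\\
&\leq C\sum_{x\in B}|u(x)||v(x)|\mu(x)
\leq C\aV{u}\aV{v}<\infty,
\end{align*}
where the last step is the Cauchy--Schwarz inequality in $L^2(X,\mu)$ and finiteness follows from $u,v\in L^2(X,\mu)$. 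This handles the ``diagonal-weighted'' sum cleanly.

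For the second, genuinely mixed sum $\sum_{x,y\in B}w(x,y)|u(x)v(y)|$, I would first apply the pointwise inequality $|u(x)v(y)|\leq\tfrac12(u(x)^2+v(y)^2)$ to split it into two pieces. The piece $\tfrac12\sum_{x,y\in B}w(x,y)u(x)^2$ is bounded exactly as above by $\tfrac{C}{2}\sum_{x\in B}u(x)^2\mu(x)\leq\tfrac{C}{2}\aV{u}^2$, summing over $y$ first. For the piece $\tfrac12\sum_{x,y\in B}w(x,y)v(y)^2$ I would instead sum over $x\in B$ first, using the symmetry $w(x,y)=w(y,x)$ and the bound $\sum_{x\in X}w(y,x)\leq C\mu(y)$ valid since $y\in B$; this gives $\tfrac{C}{2}\sum_{y\in B}v(y)^2\mu(y)\leq\tfrac{C}{2}\aV{v}^2$. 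Adding the two contributions yields $\sum_{x,y\in B}w(x,y)|u(x)v(y)|\leq\tfrac{C}{2}(\aV{u}^2+\aV{v}^2)<\infty$.

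There is no real obstacle here; the only point that requires the slightest care is that all the sums are of nonnegative terms, so Tonelli's theorem justifies the interchange of the order of summation in each step, and the symmetry of $w$ is exactly what lets us bound the $v(y)^2$ term by summing the ``wrong'' variable first. I would state these interchanges explicitly but not belabor them.
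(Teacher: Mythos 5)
Your proof is correct and follows essentially the same route as the paper: both arguments use the degree bound $\sum_{y}w(x,y)\leq C\mu(x)$ for $x\in B$ to reduce everything to $L^{2}(X,\mu)$ norms (the paper works with $f=\max\{|u|,|v|\}$ rather than Cauchy--Schwarz in $L^2$, which is immaterial). The only cosmetic difference is in the mixed sum, where the paper applies the Cauchy--Schwarz inequality with respect to the weight $w(x,y)$ on $B\times B$ and reduces to the first estimate, while you use $|u(x)v(y)|\leq\tfrac12\left(u(x)^{2}+v(y)^{2}\right)$ together with the symmetry of $w$; both are one-line reductions to the same bound.
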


\begin{proof}
Let $f=\max\{\vert u\vert,\vert v\vert\}$. Clearly, $f\in L^{2}(X,\mu)$. We estimate 
\begin{align*}
\sum_{x,y\in B}
w(x,y)|u(x)||v(x)|& \leq \sum_{x\in B}
f^2(x)\sum_{y\in B}w(x,y)
\leq \sum_{x\in B}f^2(x)\mu(x)\Deg(x)\\
&\leq\sup_{y\in B}\Deg(y)\|f\|^{2}<
\infty,
\end{align*}
since $\Deg$ is bounded on $B$ by assumption.
On the other hand, by the Cauchy Schwarz inequality and the above, we obtain
\begin{align*}
\sum_{x,y\in B}
w(x,y)|u(x)||v(y)|\leq
\Big(\sum_{x,y\in B}
w(x,y)u^2(x)\Big)^{\frac{1}{2}}\Big(\sum_{x,y\in B}
w(x,y)v^2(x)\Big)^{\frac{1}{2}}<\infty.
\end{align*}
\end{proof}

The following is an integrated Leibniz rule (see also  \cite[Theorem~3.7]{FLW}).
\begin{lem}\label{l:leibniz}(Leibniz rule) Let $U\subseteq X$ be such that  $\Deg$ is bounded on $\n(U)$. For all $f\in L^{\infty}(X)$ with $\supp f\subseteq U$ and $g,h\in L^{2}(X,\mu)$
\begin{align*}
\sum_{X}(\nabla (fg)\cdot{\nabla h})=\sum_{X}f(\nabla g\cdot{\nabla h})+\sum_{X}g(\nabla f\cdot{\nabla h}).
\end{align*}
\end{lem}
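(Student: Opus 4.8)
The strategy is to combine the pointwise difference Leibniz identity
\[(fg)(x)-(fg)(y)=f(x)\bigl(g(x)-g(y)\bigr)+g(y)\bigl(f(x)-f(y)\bigr)\]
with an absolute convergence argument that permits rearranging the ensuing double sums; the symmetry $w(x,y)=w(y,x)$ then does the rest. First I would localize all sums to $\n(U)$: since $\supp f\subseteq U$, whenever $w(x,y)>0$ the quantities $(fg)(x)-(fg)(y)$ and $f(x)-f(y)$ vanish unless $x\in U$ or $y\in U$, and in either case both $x$ and $y$ lie in $\n(U)$. Writing $(\nabla (fg)\cdot\nabla h)(x)=\sum_{y\in X}w(x,y)\bigl((fg)(x)-(fg)(y)\bigr)\bigl(h(x)-h(y)\bigr)$ and unfolding the two terms on the right-hand side in the same way, every double sum that occurs is therefore effectively indexed by pairs in $\n(U)\times\n(U)$, a set on which $\Deg$ is bounded by hypothesis.

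Next I would prove that the three double sums
\[\sum_{x,y\in\n(U)}w(x,y)\,|(fg)(x)-(fg)(y)|\,|h(x)-h(y)|,\qquad\sum_{x,y\in\n(U)}w(x,y)\,|f(x)|\,|g(x)-g(y)|\,|h(x)-h(y)|,\]
and $\sum_{x,y\in\n(U)}w(x,y)\,|g(x)|\,|f(x)-f(y)|\,|h(x)-h(y)|$ are finite. For this I would use that $fg\in L^{2}(X,\mu)$ (since $f\in L^{\infty}(X)$ and $g\in L^{2}(X,\mu)$), the crude bounds $|f(x)|\le\|f\|_{\infty}$, $|f(x)-f(y)|\le 2\|f\|_{\infty}$, and $|a-b|\,|c-d|\le(|a|+|b|)(|c|+|d|)$, and then apply Lemma~\ref{l:fubini} with $B=\n(U)$ to each of the finitely many resulting terms of the shape $w(x,y)|u(x)v(x)|$ or $w(x,y)|u(x)v(y)|$ with $u,v\in\{fg,f,g,h\}\subseteq L^{2}(X,\mu)$. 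Finiteness of these sums also guarantees that $(f,h)$ and $(g,h)$ lie in $D_{\mathrm{loc}}(x)$ at every vertex where the respective gradient product is multiplied by a nonzero factor, so all three expressions in the assertion are well-defined and finite and the double sums may be reordered freely.

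Finally I would insert the pointwise identity into $\sum_{x,y\in X}w(x,y)\bigl((fg)(x)-(fg)(y)\bigr)\bigl(h(x)-h(y)\bigr)=\sum_{X}(\nabla(fg)\cdot\nabla h)$ and split it --- legitimate by the absolute convergence just established --- into $\sum_{x\in X}f(x)(\nabla g\cdot\nabla h)(x)=\sum_{X}f(\nabla g\cdot\nabla h)$ plus $\sum_{x,y\in X}w(x,y)\,g(y)\bigl(f(x)-f(y)\bigr)\bigl(h(x)-h(y)\bigr)$. In the latter sum, interchanging the names of $x$ and $y$ and using $w(x,y)=w(y,x)$ together with the invariance of $\bigl(f(x)-f(y)\bigr)\bigl(h(x)-h(y)\bigr)$ under that swap turns it into $\sum_{x,y\in X}w(x,y)\,g(x)\bigl(f(x)-f(y)\bigr)\bigl(h(x)-h(y)\bigr)=\sum_{X}g(\nabla f\cdot\nabla h)$, which is the desired identity. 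The only delicate point is the middle step: one must keep every sum inside $\n(U)$ so that a single set with bounded $\Deg$ suffices for Lemma~\ref{l:fubini}, and check that the gradient products are finite precisely at the vertices where they are multiplied by a nonzero factor; everything else is formal.
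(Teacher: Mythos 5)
Your argument is correct and follows essentially the same route as the paper's proof: localize all sums to $\n(U)$ using $\supp f\subseteq U$, invoke Lemma~\ref{l:fubini} (with the crude bounds $|f|\le\|f\|_\infty$, $|f(x)-f(y)|\le 2\|f\|_\infty$) to get absolute convergence, and then apply the pointwise identity $fg(x)-fg(y)=f(x)(g(x)-g(y))+g(y)(f(x)-f(y))$ together with the symmetry of $w$ to relabel $g(y)$ as $g(x)$. The only slip is the parenthetical claim $f\in L^{2}(X,\mu)$, which need not hold since $\mu(U)$ may be infinite, but it is harmless because after your crude bounds the applications of Lemma~\ref{l:fubini} involve only $fg$, $g$, and $h$, all of which are in $L^{2}(X,\mu)$.
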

\begin{proof} By means of Lemma~\ref{l:fubini} and the fact that $\supp f\subseteq U$, it is not hard to see that $(fg,h),(f,h)\in D_{loc}(x)$ for $x\in n(U)$, and that $(g,h)\in D_{loc}(x)$ for $x\in U$. Moreover, all of the sums over $X$ above are, in fact, sums over $\n(U)$ and the first sum on the right hand side is over $U$. Hence,  by basic estimates, Lemma~\ref{l:fubini} and the fact $\supp f\subseteq U$, all of the sums above converge absolutely. Therefore, the statement follows by the simple algebraic manipulation $fg(x)-fg(y)=f(x)(g(x)-g(y))+g(y)(f(x)-f(y))$, $x,y\in X$.
\end{proof}

The following  Green's formula is a variant of \cite[Lemma~4.7]{HK}.

\begin{lem}\label{l:partial}(Green's formula) {Assume that $\mathrm{Deg}$ is bounded on $\n(U)$ for some set $U\subseteq X$. Then, for all $u,v\in L^{2}(X,\mu)\cap F$ with $\supp v\subseteq U$}
\begin{align*}
\sum_{ X}(\Delta u){v} \mu =\sum_{X} u {(\Delta v)}\mu=\frac{1}{2}\sum_{X} (\nabla u\cdot{\nabla v}).
\end{align*}
\end{lem}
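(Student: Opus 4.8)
The plan is to reduce all three expressions to \emph{absolutely convergent} double sums over the combinatorial neighbourhood $\n(U)\times\n(U)$ and then to symmetrize in the two summation variables; the symmetry of $w$ does all the real work, and Lemma~\ref{l:fubini} supplies the legitimacy of the rearrangements.

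First I would unfold the definitions. Since $u,v\in F$, the Laplacian is defined pointwise, and
\[
\sum_{X}(\Delta u)v\,\mu=\sum_{x\in X}v(x)\sum_{y\in X}w(x,y)\big(u(x)-u(y)\big).
\]
Because $\supp v\subseteq U$, the outer sum runs only over $x\in U$, and for such $x$ the condition $w(x,y)>0$ forces $y\in\n(U)$; hence only pairs $(x,y)\in\n(U)\times\n(U)$ contribute. The same observation applies to $\sum_X u(\Delta v)\mu$ and to $\sum_X(\nabla u\cdot\nabla v)$: a term is nonzero only when $v(x)-v(y)\neq0$ (respectively, this difference times $u(x)-u(y)$), which requires $x\in U$ or $y\in U$, and together with $w(x,y)>0$ this again confines the relevant pairs to $\n(U)\times\n(U)$. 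In particular $\Delta v$ is supported on $\n(U)$.

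Next I would establish absolute convergence. Using $|u(x)-u(y)|\le|u(x)|+|u(y)|$ and the fact that $\Deg$ is bounded on $\n(U)$, Lemma~\ref{l:fubini} applied with $B=\n(U)$ gives
\[
\sum_{x,y\in\n(U)}w(x,y)\,|v(x)|\,|u(x)-u(y)|<\infty
\]
and, symmetrically, $\sum_{x,y\in\n(U)}w(x,y)|u(x)||v(x)-v(y)|<\infty$; expanding the product $(u(x)-u(y))(v(x)-v(y))$ into four terms and invoking Lemma~\ref{l:fubini} once more shows that $\sum_X(\nabla u\cdot\nabla v)$ converges absolutely as well (so, in particular, $(u,v)\in D_{\mathrm{loc}}(x)$ for every $x\in\n(U)$ and the right-hand side is meaningful). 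These bounds let us regard each of the three quantities as one absolutely convergent sum over $\n(U)\times\n(U)$, so Fubini's theorem permits arbitrary reordering.

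With this in hand the identity is bookkeeping. Writing $S=\sum_{x,y}w(x,y)v(x)(u(x)-u(y))$ for the rearranged form of $\sum_X(\Delta u)v\,\mu$ and interchanging the names of the two summation indices (using $w(x,y)=w(y,x)$) gives $S=-\sum_{x,y}w(x,y)v(y)(u(x)-u(y))$; averaging the two representations yields
\[
S=\tfrac12\sum_{x,y}w(x,y)(u(x)-u(y))(v(x)-v(y))=\tfrac12\sum_X(\nabla u\cdot\nabla v).
\]
Running the identical symmetrization on $\sum_X u(\Delta v)\mu$ produces the same value, which establishes both equalities at once. The only step requiring genuine care is the convergence/rearrangement justification, and that is precisely what the boundedness of $\Deg$ on $\n(U)$ together with Lemma~\ref{l:fubini} is designed to provide; everything else is a single swap of indices.
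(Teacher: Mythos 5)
Your argument is correct and is exactly the route the paper takes: its one-line proof invokes the vanishing of $w(x,y)$ for $x\in U$, $y\in X\setminus\n(U)$, Lemma~\ref{l:fubini}, and Fubini's theorem, which is precisely the reduction to absolutely convergent sums over $\n(U)\times\n(U)$ followed by symmetrization in $x,y$ that you spell out. Your version simply makes the "simple algebraic manipulations" and the convergence bookkeeping explicit.
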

\begin{proof}
Since $w(x,y)=0$ whenever $x\in U, y\in X\backslash\n(U)$, the statement follows by simple algebraic manipulations,  Lemma~\ref{l:fubini}  and Fubini's theorem.
\end{proof}

\subsection{A Caccioppoli-type inequality}
The key estimate for the proof of triviality of $L^{2}$ solutions to $(\Delta + \lm)u = 0$ for $\lm>0$ is the following Caccioppoli-type inequality. See \cite[Theorem~11.1]{FLW} for a similar result for general Dirichlet forms.

\begin{lem}\label{l:key}(Caccioppoli-type inequality) Let $u\in L^2 (X, \mu)\cap F$, $U\subseteq X$
and assume that $\Deg$ is bounded on $\n(U)$.
Then, for all $v\in L^{\infty}(X)$ with $\supp v\subseteq U$,
\begin{align*} -\sum_{X}(\Delta u) {uv^{2}}\mu\leq\frac{1}{2}\sum_{X} u^2|\nabla v|^{2}.
\end{align*}
\end{lem}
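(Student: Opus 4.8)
The idea is the standard Caccioppoli trick: test the equation against $uv^2$ and expand the gradient using the Leibniz rule, then absorb the cross term by Cauchy--Schwarz (Young's inequality). The key point is that the hypothesis ``$\Deg$ bounded on $\n(U)$'' together with $v$ supported in $U$ is exactly what is needed to justify all the manipulations via Lemmas~\ref{l:fubini}, \ref{l:leibniz}, and \ref{l:partial}.

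\begin{proof}[Proof sketch]
First observe that since $\supp v\subseteq U$ and $u\in L^2(X,\mu)$ with $v\in L^\infty(X)$, we have $uv^2\in L^2(X,\mu)$ with support in $U$, and moreover $uv^2\in F$ because $\Deg$ is bounded on $\n(U)$ (so $\sum_y w(x,y)<\infty$ on $\n(U)$ is automatic, and the relevant sums converge by Lemma~\ref{l:fubini}). Hence Green's formula (Lemma~\ref{l:partial}) applies with the test function $v$ there replaced by $uv^2$, giving
\begin{align*}
-\sum_X (\Delta u)\, u v^2 \,\mu = -\frac{1}{2}\sum_X (\nabla u \cdot \nabla(uv^2)).
\end{align*}
Wait---the sign: Lemma~\ref{l:partial} reads $\sum_X(\Delta u)v\mu = \tfrac12\sum_X(\nabla u\cdot\nabla v)$, so actually $-\sum_X(\Delta u)uv^2\mu = -\tfrac12\sum_X(\nabla u\cdot\nabla(uv^2))$, and I will need to check the overall Caccioppoli estimate produces the right inequality; the point is that the ``diagonal'' term coming out of the Leibniz expansion is the good negative term and the cross term is estimated from above.

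Next apply the Leibniz rule (Lemma~\ref{l:leibniz}) to $\nabla(uv^2) = \nabla((v^2)u)$, noting $v^2\in L^\infty(X)$ with $\supp v^2\subseteq U$ and $u,u\in L^2(X,\mu)$:
\begin{align*}
\sum_X (\nabla u\cdot\nabla(uv^2)) = \sum_X v^2(\nabla u\cdot\nabla u) + \sum_X u(\nabla(v^2)\cdot\nabla u).
\end{align*}
For the second term, use the pointwise identity $v^2(x)-v^2(y) = (v(x)+v(y))(v(x)-v(y))$ together with the elementary inequality $|ab|\le \tfrac12 a^2 + \tfrac12 b^2$ applied with $a = \sqrt{w(x,y)}\,v(y)(u(x)-u(y))$-type terms, symmetrizing in $x,y$; this is the routine computation that bounds $\sum_X u(\nabla(v^2)\cdot\nabla u)$ below by $-\sum_X v^2|\nabla u|^2 - \sum_X u^2|\nabla v|^2$ (all sums being finite and over $\n(U)$ by Lemma~\ref{l:fubini}, so the rearrangements are legitimate). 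Combining, one gets $-\sum_X(\nabla u\cdot\nabla(uv^2)) \le \sum_X u^2|\nabla v|^2$ after the $v^2|\nabla u|^2$ terms cancel, and then dividing by $2$ yields the claim.

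The main obstacle is purely bookkeeping: one must verify at each step that the relevant pairs lie in $D_{\mathrm{loc}}(x)$ and that every double sum encountered converges absolutely, so that Fubini and the termwise algebraic identities are valid. This is precisely where the hypothesis that $\Deg$ is bounded on $\n(U)$ is used---through Lemma~\ref{l:fubini}---since without it the sums defining $\nabla u\cdot\nabla(uv^2)$ need not converge. The analytic content (the Cauchy--Schwarz/Young step) is elementary; the care lies in restricting all sums to $\n(U)$ via $\supp v\subseteq U$ and $w(x,y)=0$ for $x\in U$, $y\notin\n(U)$.
\end{proof}
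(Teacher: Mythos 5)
Your proposal is correct and follows essentially the same route as the paper: apply Green's formula (Lemma~\ref{l:partial}) with test function $uv^{2}$, expand via the Leibniz rule (Lemma~\ref{l:leibniz}), and absorb the cross term $\sum_{X}u(\nabla v^{2}\cdot\nabla u)$ using the factorization $v^{2}(x)-v^{2}(y)=(v(x)+v(y))(v(x)-v(y))$ and Young's inequality, with the convergence bookkeeping handled by Lemma~\ref{l:fubini} and the support restriction to $\n(U)$. Your Young-inequality splitting is phrased slightly differently but yields exactly the paper's pointwise bound $\tfrac{1}{2}(u(x)-u(y))^{2}(v^{2}(x)+v^{2}(y))+u^{2}(x)(v(x)-v(y))^{2}$, so the arguments coincide.
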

\begin{proof}
By Lemma~\ref{l:leibniz} and Lemma~\ref{l:partial} we have
\begin{align*}\label{e:partial}
\sum_{X}(\Delta u){uv^{2}}\mu&=
\frac{1}{2}\sum_{X}(\nabla u\cdot{\nabla (uv^{2})})= \frac{1}{2} \sum_{X}{v^{2}}|\nabla u|^{2} +\frac{1}{2}\sum_{X}{u}(\nabla u\cdot{\nabla v^{2} }).
\end{align*}
We focus on the second sum on the right hand side.
Since a geometric mean can be estimated by its corresponding arithmetic mean, we have $|ab|\leq \frac{\delta}{2} a^{2}+\frac{1}{2\delta}b^{2}$ for $a,b\in\R$ and $\delta>0$.
We use this estimate with $a=(u(x)-u(y))(v(x)+v(y))$, $b=u(x)(v(x)-v(y))$  and $\delta=1/2$ for the terms in the second sum on the right hand side above
\begin{align*}
|u(x)(u(x)-u(y))(v^{2}(x)-v^{2}(y))| &\leq\frac{1}{4}(u(x)-u(y))^{2}(v(x)+v(y))^{2} +u^2(x)(v(x)-v(y))^{2}\\
&\leq\frac{1}{2}(u(x)-u(y))^{2}(v^2(x)+v^2(y)) +u^2(x)(v(x)-v(y))^{2}.
\end{align*}
Multiplying by $w(x,y)$ and summing over $x,y\in X$ yields $$-\frac{1}{2}\sum_{X}{u}(\nabla u\cdot{\nabla v^{2} })\leq \frac{1}{2} \sum_{X}v^{2}|\nabla u|^{2} +\frac{1}{2} \sum_{X}u^{2}|\nabla v|^{2}.$$
The assertion now follows from the equality in the beginning of the proof.
\end{proof}

\subsection{Uniqueness of solutions}
The following  is an analogue of \cite[Lemma 11.6]{GRIBOOK}.

\begin{pro}\label{pro-uniqueness}
Assume that the weighted degree function $\mathrm{Deg}$
is bounded on the combinatorial neighborhood of each distance ball.
Then, for all $\lambda>0$, the
equation
$$(\Delta+\lambda)u=0,$$
has only the trivial solution in $L^2 (X, \mu) \cap F$.
\end{pro}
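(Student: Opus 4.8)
The plan is to prove that any $u \in L^2(X,\mu) \cap F$ satisfying $(\Delta+\lambda)u = 0$ for some $\lambda > 0$ must vanish, by combining the Caccioppoli-type inequality of Lemma~\ref{l:key} with a suitable choice of cutoff functions built from the intrinsic pseudo metric $d$. First I would fix a base point $x_0 \in X$ and, for each $r > 0$, introduce the cutoff $v = v_r \in C_c(X)$ defined by $v_r(x) = (2 - d(x,x_0)/r)_+ \wedge 1$, so that $v_r \equiv 1$ on $B_r(x_0)$, $v_r$ is supported in $B_{2r}(x_0)$, and $v_r$ is Lipschitz (with respect to $d$) with constant $1/r$. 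Setting $U := B_{2r}(x_0)$, note that $\Deg$ is bounded on $\n(U)$ by hypothesis, so Lemma~\ref{l:key} applies with this $u$, $U$, and $v = v_r$.

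The next step is to turn the left-hand side into something involving $\lambda$. Since $(\Delta + \lambda)u = 0$, we have $\Delta u = -\lambda u$, hence $-\sum_X (\Delta u)\, u v_r^2 \,\mu = \lambda \sum_X u^2 v_r^2 \,\mu \geq \lambda \sum_{B_r(x_0)} u^2 \mu$. On the right-hand side, I would bound $\sum_X u^2 |\nabla v_r|^2$ using the fact that $d$ is intrinsic: for each $x$,
\begin{align*}
|\nabla v_r|^2(x) = \sum_{y \in X} w(x,y)(v_r(x) - v_r(y))^2 \leq \frac{1}{r^2}\sum_{y \in X} w(x,y)\, d(x,y)^2 \leq \frac{\mu(x)}{r^2},
\end{align*}
using the Lipschitz bound $|v_r(x) - v_r(y)| \leq d(x,y)/r$ and the intrinsic inequality. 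Therefore $\sum_X u^2 |\nabla v_r|^2 \leq \frac{1}{r^2}\sum_X u^2 \mu = \frac{1}{r^2}\|u\|^2$. Combining, $\lambda \sum_{B_r(x_0)} u^2 \mu \leq \frac{1}{2r^2}\|u\|^2$, and letting $r \to \infty$ forces $\sum_X u^2 \mu = 0$, i.e.\ $u = 0$.

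The main obstacle I anticipate is a technical one: verifying that the Lipschitz cutoff $v_r$ really lies in $L^\infty(X)$ with support in $U = B_{2r}(x_0)$ as required by Lemma~\ref{l:key}, and that the sum $\sum_X u^2|\nabla v_r|^2$ is genuinely finite so that the inequality chain is not vacuous — this is exactly where the intrinsic property is essential and where one must be careful that $d(x,y)$ may be infinite only when $w(x,y) = 0$ (so those terms drop out). One should also check that $v_r$ is well-defined and bounded even when $d$ is only a pseudo metric (balls could be infinite, but that is harmless since we only need $v_r \in L^\infty$, not compact support in the topological sense; however $\supp v_r \subseteq B_{2r}(x_0)$ still holds set-theoretically). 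A minor point is that one needs $B_{2r}(x_0)$ to genuinely be the relevant set on which $\Deg$ is controlled; since the hypothesis is stated for the combinatorial neighborhood of \emph{each} distance ball, this is immediate. Once these points are settled, the estimate is routine and the conclusion follows by sending $r \to \infty$.
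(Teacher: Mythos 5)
Your proof is correct and follows essentially the same route as the paper: the Caccioppoli-type inequality of Lemma~\ref{l:key} applied to a Lipschitz cutoff built from the intrinsic pseudo metric, with the intrinsic condition giving $|\nabla v_r|^2 \leq \mu/r^2$ and the radius sent to infinity. The only (inessential) difference is your one-parameter cutoff $v_r$ supported in $B_{2r}$ versus the paper's two-parameter $\eta_{R,r}$, and you correctly note that $v_r$ need only lie in $L^\infty(X)$ with $\supp v_r \subseteq B_{2r}(x_0)$ rather than in $C_c(X)$, which is exactly what Lemma~\ref{l:key} requires.
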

\begin{proof}
Let $0 \leq r<R$, fix $x_0 \in X$, and consider the cut-off function $\eta=\eta_{R,r}:X\to\R$ given by
\begin{equation*}
\eta(x)=\left(\frac{R-d(x, x_0)}{R-r}\right)_+ \wedge 1.
\end{equation*}
Note that $0\leq\eta\le 1$, $\eta\vert_{B_{r}}=1$ and $\eta\vert_{X\setminus B_{R}}=0$. Moreover, $\eta$ is Lipshitz continuous with Lipshitz constant $\frac{1}{R-r}$ (of course, with respect to the intrinsic pseudo metric $d$). This immediately implies, as $d$ is an intrinsic
pseudo metric, that
\begin{align*}
|\nabla \eta|^{2}(x)\leq \frac{1}{(R-r)^{2}}\sum_{y\in X}w(x,y)d^{2}(x,y)\leq\frac{\mu(x)}{(R-r)^{2}},\qquad x\in X.
\end{align*}
Fix $\lambda>0$ and assume that $u\in L^2 (X, \mu)\cap F$ is a
solution to the equation $(\Delta+\lambda)u=0.$
Then, we have by Lemma~\ref{l:key} and the estimate on $|\nabla \eta|^{2}$ above, that
\begin{align*}
\lambda\aV{u1_{B_{r}}}^{2}
\leq \lambda\aV{u\eta}^{2}=-\sum_{X}(\Delta u)u\eta^{2} \mu \leq \frac{1}{2}\sum_{X} u^2|\nabla \eta|^{2} \leq \frac{1}{2(R-r)^{2}}\aV{u}^{2}.
\end{align*}
Letting $R\rightarrow \infty$,  we see that  $u\equiv 0$ on $B_r$. Since $r$ is  chosen arbitrarily, $u\equiv 0$ on $X$.
\end{proof}

\begin{rem}It would be interesting to prove a similar statement as Proposition~\ref{pro-uniqueness} for $L^{p}$.
\end{rem}

\subsection{Proofs of Theorems~\ref{thm-main} and~\ref{thm-locfinite}}

The proof of  Theorem~\ref{thm-main} follows by standard techniques used in \cite{KL} and \cite{HKLW}.

\begin{proof}[Proof of Theorem~\ref{thm-main}]  By \cite[Corollary~4.3]{HKLW}, $D(Q)=D(\qn)$ is equivalent to the non-existence of non-trivial solutions to $(\Delta+\lambda)u=0$ for $\lambda>0$ in $D(\qn)$. Thus,  $D(Q)=D(Q^{\max})$ follows from Proposition~\ref{pro-uniqueness}.

Define ${D}_{\max}
=\{u\in L^2(X,\mu)\cap F\mid \Delta u \in L^2(X,\mu)\}.$
By Theorem~9 in \cite{KL}, $L$ is a restriction of $\Delta$ which implies that $D(L)\subseteq {D}_{\max}
$. Letting $f\in D_{\max}
$ we see that $g:=(\Delta+\lambda)f\in L^{2}(X,\mu)$ for all $\lambda>0$, so that $u:=(L+\lambda)^{-1}g\in D(L)$. As $u$ solves the equation $(\Delta +\lambda)u=g$ (see Lemma~2.8 in \cite{KL}), we conclude that $f=u$ by the uniqueness of solutions, Proposition~\ref{pro-uniqueness} (as $f$ solves the equation by definition). Thus, $f\in D(L)$ and, therefore, $D(L)=D_{\max}
$.

Assuming $\Delta C_{c}(X)\subseteq L^{2}(X,\mu)$, essential self-adjointness is a rather immediate consequence of $L=L_c^{*}$. By Green's formula for functions in $v\in C_{c}(X)$ and $f\in F$ (see \cite[Lemma~4.7]{HK} or \cite[Proposition~3.3]{KL}), we have $\sum_{X}f (L_{c}v) \mu=\sum _{X}(\Delta f) v  \mu$ and thus $D(L_c^*) = D_{\max}$.
Hence, by what we have shown above, we have
$D(L_{c}^{*})=D(L)$ and, therefore, it follows that $L=L_{c}^{*}$.
\end{proof}

\begin{proof}[Proof of Theorem~\ref{thm-locfinite}]
As we assume local finiteness, it is clear that $\Delta C_c(X) \subseteq L^2(X,\mu)$.  Furthermore, by Theorem~\ref{t:locfinite}, the metric  completeness of $(X,d)$ implies that distance balls are finite.  Note that the combinatorial neighborhood of a finite set is again finite. Hence, $\Deg$ is bounded on the combinatorial neighborhood of each distance ball which implies the statements about essential self-adjointness and $D(Q) = D(\qn)$ by Theorem~\ref{thm-main}. As uniqueness of Markovian extensions  is equivalent to $D(Q) = D(\qn)$ in the locally finite case, see \cite[Theorem 4.2]{HKLW}, the second statement follows as well. 
\end{proof}

\section{Cauchy boundary and equilibrium potentials} \label{PM}
Let $(X,w,\mu)$ be a locally finite weighted graph and let $d$ be a path metric. Recall that $(\overline X,\overline d)$ denotes the metric completion of $(X,d)$ and $\partial_{C}X=\overline X\setminus X$ denotes the Cauchy boundary. In this section we prove Theorems~\ref{thm-locfinite2} and ~\ref{thm-Minkowski}.


\subsection{Existence of  equilibrium potentials}
The following is well known and follows directly from \cite[Lemma~2.1.1.]{FOTBOOK}.
\begin{lem}
If $\mathrm{Cap}(O)<\infty$ for an open set $O \subset \overline X$,
then there is a unique element $e \in D(\qn)$ such that  $0 \leq e \leq 1$, $e|_{O \cap X} \equiv 1$, and $\mathrm{Cap}(O)=\| e \|_{\widetilde Q}$.
\end{lem}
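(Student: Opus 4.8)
The statement is the standard construction of an equilibrium potential, and the plan is to realize $e$ as the orthogonal projection of $0$ onto a closed convex set in the energy Hilbert space. First recall that, since $O$ is open, the definition of capacity gives $\mathrm{Cap}(O) = \mathrm{Cap}(O \cap X)$, so, writing $U := O \cap X$, we must show that the infimum defining
\[
\mathrm{Cap}(U) = \inf\{\|u\|_{\widetilde Q} \mid u \in D(\qn),\ 1_{U}\leq u\}
\]
is attained at a unique point $e$ with $0 \le e \le 1$. Let $\mathcal{L}_U := \{u \in D(\qn) \mid u \ge 1_U\}$. Since $(\qn, D(\qn))$ is a (closed) Dirichlet form, $D(\qn)$ equipped with the inner product $\langle u,v\rangle_{\widetilde Q} := \widetilde Q(u,v) + \langle u,v\rangle$ is a Hilbert space, and $\|\cdot\|_{\widetilde Q}$ is the associated norm.

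The key steps are then: (i) $\mathcal{L}_U$ is nonempty because $\mathrm{Cap}(U) = \mathrm{Cap}(O) < \infty$; (ii) $\mathcal{L}_U$ is convex, which is immediate; (iii) $\mathcal{L}_U$ is closed in $(D(\qn), \|\cdot\|_{\widetilde Q})$ — indeed, $\widetilde Q$-convergence implies $L^2(X,\mu)$-convergence, hence pointwise convergence along a subsequence, and since $\mu$ has full support the inequality $u \ge 1_U$ passes to the limit. By the Hilbert space projection theorem applied to the nonempty closed convex set $\mathcal{L}_U$, there is a unique element $e \in \mathcal{L}_U$ of minimal $\|\cdot\|_{\widetilde Q}$-norm, and by the definition of capacity $\|e\|_{\widetilde Q} = \mathrm{Cap}(U) = \mathrm{Cap}(O)$.

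It remains to verify $0 \le e \le 1$, which is where the Markovian property of $\qn$ enters. The map $t \mapsto (t \vee 0) \wedge 1$ is a normal contraction, so it operates on the Dirichlet form $\qn$; hence $\tilde e := (e \vee 0) \wedge 1$ again lies in $D(\qn)$ with $\|\tilde e\|_{\widetilde Q} \le \|e\|_{\widetilde Q}$. Moreover $\tilde e \in \mathcal{L}_U$: on $U$ we have $e \ge 1_U = 1$, so $\tilde e = 1$ there, hence $\tilde e \ge 1_U$ everywhere. By the uniqueness of the minimizer, $e = \tilde e$, so $0 \le e \le 1$; and then $1 = 1_U \le e \le 1$ on $U$ forces $e|_{U} \equiv 1$, i.e.\ $e|_{O \cap X} \equiv 1$.

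The argument is essentially a transcription of \cite[Lemma~2.1.1]{FOTBOOK} into the present setting, so there is no serious obstacle; the only points requiring (routine) care are the closedness of $\mathcal{L}_U$ — handled via the implication $\widetilde Q$-convergence $\Rightarrow$ $L^2$-convergence $\Rightarrow$ pointwise convergence along a subsequence together with the full support of $\mu$ — and the invocation of the Markovian property to ensure the unit contraction does not increase the energy norm, which is available precisely because $\qn$ is a Dirichlet form.
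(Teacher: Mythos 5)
Your proposal is correct and is essentially the paper's argument: the paper simply cites \cite[Lemma~2.1.1]{FOTBOOK} (whose proof is exactly your projection-onto-a-closed-convex-set argument combined with the unit contraction and the Markov property of $Q^{\max}$) and then makes the same reduction $\mathrm{Cap}(O)=\mathrm{Cap}(O\cap X)$ for open $O\subseteq\overline X$. You have merely written out the cited proof in the discrete setting, so there is nothing to add.
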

\begin{proof}  From \cite[Lemma~2.1.1.]{FOTBOOK} it follows that for any $U\subseteq X$ there is such an $e\in D(Q^{\max})$ (as we consider $X$ equipped with the discrete topology).  Note that in \cite[Lemma~2.1.1.]{FOTBOOK}  regularity of the form is a standing assumption but this is not needed for the proof.   Now, for an open set $O\subseteq \ov X$ the equality $\mathrm{Cap}(O)=\mathrm{Cap}(O\cap X)$ follows from the definition (by taking $A=O$). Hence, we let $e$ for $O$ be the corresponding $e$ for $O\cap X$. 
\end{proof}

We call such an $e$ the \textit{equilibrium potential} associated to $O$.


\subsection{The boundary alternative}\label{s:boundaryalternative}
The following lemma shows that if the minimal and the maximal forms agree, then the capacity of any subset of the boundary is either zero or infinite.

\begin{lemma}\label{l:boundary} Let $A\subseteq \partial_{C}X$.
 If $D(Q)=D(\qn)$, then either $\mathrm{Cap}(A)=\infty$ or $\mathrm{Cap}(A)=0$.
\end{lemma}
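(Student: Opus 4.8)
The plan is to argue by contradiction: assume $0 < \mathrm{Cap}(A) < \infty$ and derive a contradiction from the fact that the equilibrium potential for an open neighborhood of $A$ would have to lie in $D(Q) = D(\qn)$ yet cannot be approximated by compactly supported functions near the boundary. First I would use the definition of $\mathrm{Cap}(A)$ together with $\mathrm{Cap}(A) < \infty$ to find an open set $O \subseteq \overline X$ with $A \subseteq O$ and $\mathrm{Cap}(O \cap X) < \infty$, and then invoke the previous lemma to produce the equilibrium potential $e \in D(\qn)$ with $0 \le e \le 1$, $e|_{O \cap X} \equiv 1$, and $\|e\|_{\widetilde Q}^2 = \mathrm{Cap}(O) = \mathrm{Cap}(O \cap X)$. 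Since $D(Q) = D(\qn)$ by hypothesis, $e \in D(Q) = \overline{C_c(X)}^{\|\cdot\|_{\widetilde Q}}$, so there is a sequence $\varphi_n \in C_c(X)$ with $\varphi_n \to e$ in the $\|\cdot\|_{\widetilde Q}$-norm.

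Next I would exploit the mismatch between the supports. The key point is that $e$ is identically $1$ on $O \cap X$, which accumulates at the boundary set $A$, whereas each $\varphi_n$ has support bounded away from $\partial_C X$ (finite support, and by local finiteness plus the use of path metrics every finite set is at positive $\overline d$-distance from $\partial_C X$). I would like to amplify this via the scaling/homogeneity of capacity: the quantity $\mathrm{Cap}(A)$, and hence $\mathrm{Cap}(O\cap X)$ for $O$ shrinking toward $A$, can be made arbitrarily small if $\mathrm{Cap}(A) = 0$ — but we are assuming it is a fixed positive number, so this is not directly the contradiction. Instead, the cleaner route is: from $\varphi_n \to e$ in $\|\cdot\|_{\widetilde Q}$, for $n$ large the truncation $\widetilde\varphi_n := (0 \vee \varphi_n) \wedge 1 \in C_c(X) \cap D(Q^{\max})$ still converges to $e$ (truncation is a contraction in $\widetilde Q$ on Dirichlet forms), and each $\widetilde\varphi_n$ satisfies $\mathbf 1_{U_n} \le \widetilde\varphi_n$ where $U_n := \{\widetilde\varphi_n = 1\} \subseteq X$ is a finite set. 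One then shows $U_n$ must exhaust a neighborhood of $A$ in the limit — but a finite $U_n$ cannot meet a full neighborhood of $A$ in $\overline X$. More precisely, I would show $\mathrm{Cap}(A) \le \mathrm{Cap}(\text{some open } O') \le \|\widetilde\varphi_n\|_{\widetilde Q}^2$ only works once $\widetilde\varphi_n \ge 1$ on $O' \cap X$, which fails for finite $U_n$; so I would instead directly contradict $\mathrm{Cap}(A) > 0$ by showing that for any $\eps > 0$ one can build an admissible competitor of $\widetilde Q$-norm $< \eps$.

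The honest mechanism, and the step I expect to be the main obstacle, is the following scaling argument, which is the graph analogue of the manifold fact that a boundary of positive finite capacity cannot exist when the minimal and maximal domains agree. The idea: if $\mathrm{Cap}(A) = c \in (0, \infty)$, consider the equilibrium potential $e$ for $O$; since $e \in D(Q)$, pick $\varphi \in C_c(X)$ with $\|e - \varphi\|_{\widetilde Q} < \eps$. Then $e - \varphi$ is, near $A$, essentially equal to $e$ (because $\varphi$ vanishes near $A$), so $e - \varphi$ is itself an admissible function for the capacity of a slightly smaller neighborhood $O'$ of $A$ (after a harmless truncation), giving $\mathrm{Cap}(A) \le \mathrm{Cap}(O' \cap X) \le \|e - \varphi\|_{\widetilde Q}^2 + (\text{small correction}) < \eps^2 + o(1)$. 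Letting $\eps \to 0$ forces $\mathrm{Cap}(A) = 0$, contradicting $c > 0$. Making the phrase ``$e - \varphi$ is admissible for a smaller neighborhood'' precise — controlling the region where $e - \varphi$ drops below $1$, and verifying that this region stays away from $A$ because $\varphi$ has finite (hence $\overline d$-separated-from-$A$) support, while handling the truncation so that one stays in $D(\qn)$ with controlled norm — is the technical heart of the argument and the place where local finiteness and the use of a path metric are genuinely needed.
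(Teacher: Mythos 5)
Your final mechanism is exactly the paper's proof: take the equilibrium potential $e$ of an open $O\supseteq A$ with finite capacity, approximate $e$ by $e_n\in C_c(X)$ in $\aV{\cdot}_{\widetilde Q}$ (possible since $D(Q)=D(\qn)$), and observe that $(e-e_n)_+\wedge 1$ equals $1$ on $O_n\cap X$ for some neighborhood $O_n$ of $A$ in $\overline X$ (because the finite support of $e_n$ is $\overline d$-separated from $\cb$ by local finiteness and the path metric), whence $\mathrm{Cap}(A)\le \|(e-e_n)_+\wedge 1\|_{\widetilde Q}\le\|e-e_n\|_{\widetilde Q}\to 0$. The only cosmetic differences are that no contradiction is needed (one shows $\mathrm{Cap}(A)=0$ directly) and your ``small correction'' is unnecessary, since the truncation is a normal contraction and capacity here is the infimum of $\widetilde Q$-norms rather than their squares.
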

\begin{proof} Assume that $D(Q) = D(Q^{\max})$ and $A$ has finite capacity. Then, there exists an open set $O \subseteq \ov X$ such that $A\subseteq O$ and $\cp(O)< \infty$.  Let $e$ be the equilibrium potential associated to $O$. Since $D(Q) = D(Q^{\max})$ there exists a sequence of functions  $e_n$  in $C_c(X)$ converging to $e$ as $n \to \infty$ in the $\aV{\cdot}_{\widetilde{Q}}$ norm.
Clearly, $(e-e_n)_+ \wedge 1$ belongs to $D(Q^{\max})$ and equals $1$ on $O_n\cap X$, where $O_n$ is a neighborhood of $A$ in $\overline{X}$.  Therefore,
\[ {\rm Cap}(A) \le \liminf_{n\to\infty} {\rm Cap}(O_n) \le \lim_{n\to\infty} \| (e-e_n)_+ \wedge 1 \|_{\widetilde{Q}}\leq \lim_{n\to\infty}\| e-e_n \|_{\widetilde{Q}}= 0.    \]
\end{proof}


\subsection{Approximation by equilibrium potentials}\label{s:boundaryalternative}

Next, we show that every bounded function in $D(Q^{\max})$ can be approximated via  equilibrium potentials if the boundary has capacity zero.

\begin{lemma}\label{l:equpot} Assume that $\partial_{C}X$ is polar and let $e_{n}$ be the equilibrium potentials associated to open sets $O_{n}\subseteq \overline X$ with $\cb \subseteq O_n$ and $\mathrm{Cap}(O_{n})\to0$ as $n \to \infty$. Then $\|u-(1-e_{n})u\|_{\ow Q}\to0$ as $n\to\infty$ for all $u\in D(Q^{\max}) \cap L^\infty(X)$.
\end{lemma}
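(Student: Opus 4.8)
The plan is to reduce the statement to showing that $\|e_nu\|_{\widetilde{Q}}\to0$, since $u-(1-e_n)u=e_nu$, and then to treat the $L^2$-part and the form-part of this norm separately. First I would extract the needed facts from the hypothesis: by definition of the equilibrium potential, $\mathrm{Cap}(O_n)=\|e_n\|_{\widetilde{Q}}\to0$, so $\widetilde{Q}(e_n)\to0$ and $\|e_n\|\to0$; since the graph is discrete, $e_n(x)^2\mu(x)\le\|e_n\|^2$ for every $x\in X$, whence $e_n\to0$ pointwise on $X$. The $L^2$-part is then immediate: as $0\le e_n\le1$ and $u\in L^\infty(X)$, we have $\|e_nu\|\le\|u\|_\infty\|e_n\|\to0$, and in particular $e_nu\in L^2(X,\mu)$.

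For the form-part I would use the algebraic identity
\[ e_n(x)u(x)-e_n(y)u(y)=\tfrac12\big(e_n(x)+e_n(y)\big)\big(u(x)-u(y)\big)+\tfrac12\big(u(x)+u(y)\big)\big(e_n(x)-e_n(y)\big) \]
for $x,y\in X$, together with $(a+b)^2\le 2a^2+2b^2$, the bounds $\big(e_n(x)+e_n(y)\big)^2\le2\big(e_n(x)^2+e_n(y)^2\big)$ and $\big(u(x)+u(y)\big)^2\le4\|u\|_\infty^2$, and the symmetry of $w$ and of $(u(x)-u(y))^2$. Multiplying by $w(x,y)$ and summing over $x,y\in X$ (all summands are nonnegative, so the rearrangements are legitimate) gives $e_nu\in D(\qn)$ and the estimate
\[ \widetilde{Q}(e_nu)\;\le\;\sum_{x,y\in X}w(x,y)\,e_n(x)^2\big(u(x)-u(y)\big)^2\;+\;2\|u\|_\infty^2\,\widetilde{Q}(e_n). \]

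Finally I would let $n\to\infty$ in this estimate. The second term vanishes because $\widetilde{Q}(e_n)\to0$. For the first term, for each pair $(x,y)$ the summand $w(x,y)e_n(x)^2(u(x)-u(y))^2$ tends to $0$ (since $e_n(x)\to0$) and is dominated by $w(x,y)(u(x)-u(y))^2$, whose total sum equals $2\widetilde{Q}(u)<\infty$ as $u\in D(\qn)$; dominated convergence then shows the first term tends to $0$ as well. Hence $\widetilde{Q}(e_nu)\to0$, and combined with $\|e_nu\|\to0$ this yields $\|u-(1-e_n)u\|_{\widetilde{Q}}\to0$. The only point requiring care is that the leading term cannot be controlled by the crude bound $e_n(x)^2\le1$ (which merely returns $2\widetilde{Q}(u)$); one genuinely needs the pointwise decay $e_n\to0$ — which itself rests on the discreteness of $X$ — followed by a dominated-convergence argument, while everything else is elementary.
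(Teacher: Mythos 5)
Your proof is correct and follows essentially the same route as the paper: reduce to $e_nu$, handle the $L^2$-part via $\|e_nu\|\le\|u\|_\infty\|e_n\|$, and bound $\ow Q(e_nu)$ by $\sum_{x,y}w(x,y)e_n(x)^2(u(x)-u(y))^2+2\|u\|_\infty^2\,\ow Q(e_n)$, finishing with dominated convergence from the pointwise decay $e_n\to0$. The only cosmetic difference is your symmetric splitting of $e_n(x)u(x)-e_n(y)u(y)$ in place of the paper's Leibniz-type splitting, which yields the same estimate.
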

\begin{proof}  Note that $u-(1-e_{n})u=e_{n}u$ and $\|e_{n}u\|\leq \|u\|_{\infty}\|e_{n}\|\to 0$ as $n\to\infty$. Moreover, we have
\begin{align*}
\ow{Q}(e_{n}u) &= \frac{1}{2} \sum_{X} | \nabla (e_n u) |^2 \leq \sum_{X} e_n^2 | \nabla u|^2 + \sum_{X} u^2 | \nabla e_n |^2 \\
& \leq \sum_{X} e_n^2 | \nabla u|^2 + 2 \aV{u}_{\infty}^2 \ow{Q}(e_n) \to 0\quad\mbox{as $n \to \infty$}
\end{align*}
by noting that $e_n(x) \to 0$ for all $x$ and applying the Lebesgue dominated convergence theorem.
\end{proof}


\subsection{Restriction to complete subgraphs}

In the next lemma we show that bounded functions in $D(Q^{\max})$ that are zero close to the boundary can be approximated by finitely supported functions.
We show this by restricting our attention to complete subgraphs. In order for the restriction of an intrinsic  metric to be intrinsic, we need to assume that the metric is strongly intrinsic.

\begin{lemma}\label{l:subgraphs} Assume that  $d=d_{\si}$ is a strongly intrinsic path metric. Let $O\subseteq \overline X$ be open with $\partial_{C}X\subseteq O$, $\mathrm{Cap}(O)<\infty$ and let $e$ be the equilibrium potential associated to $O$. Then, $C_{c}(X)$ is dense in $(1-e)(D(Q^{\max}) \cap L^\infty(X)) =\{(1-e)u\mid u\in D(Q^{\max}) \cap L^\infty(X)\}$ with respect to $\aV{\cdot}_{\ow{Q}}.$
\end{lemma}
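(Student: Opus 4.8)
Write $O^{c}=\overline{X}\setminus O$ and let $Y=O^{c}\cap X$, so that $Y$ is a closed subset of $X$ (in the discrete topology, automatically) with $\overline Y\subseteq\overline X$. The key geometric point, which I would establish first, is that $(Y,d|_{Y})$ is metrically complete: a Cauchy sequence in $Y$ converges in $\overline X$, its limit cannot lie in $\partial_{C}X\subseteq O$ since $Y$ stays in the closed set $O^{c}$, so the limit lies in $X$, hence in $Y$. Next I would produce a path metric $\sigma'$ on the subgraph induced on $Y$ making $(Y,d_{\sigma'})$ complete: here is exactly where strong intrinsicality enters. Since $d=d_{\sigma}$ is strongly intrinsic, the restriction of the edge weight $\sigma$ to edges within $Y$ still satisfies $\frac{1}{\mu(x)}\sum_{y\sim x,\,y\in Y}w(x,y)\sigma(x,y)^{2}\leq 1$, so $d_{\sigma}|_{Y}$ dominates the intrinsic path metric of the induced subgraph; by the Hopf–Rinow type result for path metrics on locally finite graphs (Appendix, Theorem~\ref{t:locfinite}) and completeness of $Y$ in $d_{\sigma}|_{Y}$, distance balls in the induced subgraph on $Y$ are finite.

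**Reduction via Theorem~\ref{thm-main} on the subgraph.** Apply Theorem~\ref{thm-main} (or rather its Corollary) to the induced weighted graph $(Y,w|_{Y\times Y},\mu|_{Y})$: since every distance ball is finite and local finiteness gives that the combinatorial neighborhood of a finite set is finite, $\mathrm{Deg}_{Y}$ is bounded on the combinatorial neighborhood of every distance ball, hence $D(Q_{Y})=D(Q^{\max}_{Y})$, i.e.\ $C_{c}(Y)$ is dense in $D(Q^{\max}_{Y})$ with respect to $\aV{\cdot}_{\widetilde Q_{Y}}$. Now given $u\in D(Q^{\max})\cap L^{\infty}(X)$, the function $(1-e)u$ vanishes on $O\cap X$, hence is supported in $Y$; I would argue that its restriction to $Y$ lies in $D(Q^{\max}_{Y})$ (its $Y$-energy is bounded by its $X$-energy plus the contribution of edges crossing from $Y$ to $X\setminus Y$, which is controlled because $u$ is bounded and $e=1$ on the far endpoints makes those crossing differences small—this needs the crossing edges to land in $O\cap X$ where $1-e$ vanishes; more carefully one uses $\widetilde Q((1-e)u)<\infty$ directly, which already bounds all edge contributions incident to $\supp((1-e)u)\subseteq Y$). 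Therefore there is a sequence $f_{n}\in C_{c}(Y)\subseteq C_{c}(X)$ with $\aV{(1-e)u-f_{n}}_{\widetilde Q_{Y}}\to 0$.

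**Transferring back to $X$.** The final step is to check $\aV{(1-e)u-f_{n}}_{\widetilde Q}\to 0$, i.e.\ that the full-graph energy norm of $(1-e)u-f_{n}$ is controlled by the subgraph one. Writing $g_{n}=(1-e)u-f_{n}$, which is supported in $Y$, the difference $\widetilde Q(g_{n})-\widetilde Q_{Y}(g_{n})$ is exactly the sum over edges $(x,y)$ with $x\in Y$, $y\in X\setminus Y$ of $w(x,y)g_{n}(x)^{2}$ (since $g_{n}(y)=0$), which is bounded by $\sum_{x\in Y, y\notin Y}w(x,y)g_{n}(x)^{2}$; this is $\le$ the part of $\widetilde Q(g_{n})$ already counted, so in fact $\widetilde Q_{Y}(g_{n})\le \widetilde Q(g_{n})\le \widetilde Q_{Y}(g_{n})+\sum_{x\in Y,y\notin Y}w(x,y)g_{n}(x)^{2}$; one sees the crossing term is dominated by a convergent-to-zero quantity because these crossing edges have endpoints in $O\cap X$ where $e=1$, so on them $(1-e)u=0$ and $g_{n}=-f_{n}$, while for $n$ large $f_{n}$ can be taken supported away from this crossing region (shrink the cutoff). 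The main obstacle I anticipate is precisely this bookkeeping at the interface between $Y$ and $X\setminus Y$: one must organize the argument so that the edges crossing the boundary of $Y$ never contribute, which is why the cutoff by $(1-e)$ rather than by a coarse indicator is essential, and why $e$ being an \emph{equilibrium potential} (hence $\equiv 1$ on $O\cap X$) is used, not merely $\mathrm{Cap}(O)<\infty$. Everything else is routine once the completeness of $(Y,d|_{Y})$ and the applicability of Theorem~\ref{thm-main} to the subgraph are in place.
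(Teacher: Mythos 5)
Your set-up matches the paper's: restrict to $Y=X\setminus O$, use strong intrinsicality so that the restricted edge weights still satisfy the intrinsic condition, prove completeness of $Y$ from $\cb\subseteq O$ and discreteness, and invoke the Hopf--Rinow theorem (Theorem~\ref{t:locfinite}) to get finite balls in the induced subgraph. (Two small slips there: the inequality goes the other way, $d_{\si_Y}\geq d\vert_Y$, which is the direction you actually need to transfer Cauchy sequences; and you never address the case where $Y$ is disconnected, which the paper handles by a diagonal argument over components.) Where you then diverge is the crux, and that is where your argument has a genuine gap. The paper does \emph{not} pass through an abstract density statement on the subgraph; it approximates $v=(1-e)u$ by the explicit cutoffs $\eta_R v$ with $\eta_R$ built from $d_Y$, precisely so that the remainder $g_R=(1-\eta_R)v$ carries the pointwise bound $|g_R(x)|\leq \aV{u}_\infty(1-\eta_R(x))(1-e(x))$. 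This bound is what controls the interface term
\begin{align*}
\sum_{x\in Y}\sum_{y\in O\cap X}w(x,y)\,g_R(x)^2 \;\leq\; \aV{u}_\infty^2\sum_{x\in Y}(1-\eta_R(x))^2\sum_{y\in O\cap X}w(x,y)(1-e(x))^2,
\end{align*}
which tends to $0$ by dominated convergence because $e\equiv 1$ on $O\cap X$ gives $\sum_{x\in Y}\sum_{y\in O\cap X}w(x,y)(1-e(x))^2\leq \ow Q(e)<\infty$.

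Your route instead produces $f_n\in C_c(Y)$ only through the identity $D(Q_Y)=D(Q_Y^{\max})$, i.e.\ convergence in the subgraph form norm $\aV{\cdot}_{\widetilde Q_Y}$, and then you must show that the crossing term $\sum_{x\in Y,\,y\in O\cap X}w(x,y)\,g_n(x)^2$ with $g_n=(1-e)u-f_n$ also vanishes. Nothing in $\aV{g_n}_{\widetilde Q_Y}\to 0$ gives this: the crossing term is a potential term $\sum_x c(x)g_n(x)^2$ with $c(x)=\sum_{y\in O\cap X}w(x,y)$, and $c/\mu$ is typically unbounded near the Cauchy boundary, so neither $L^2$-convergence nor subgraph-energy convergence controls it (what you would need is the density of $C_c(Y)$ for the Schr\"odinger-type form $Q_Y^{\max}+c$, which Theorem~\ref{thm-main} does not provide). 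Your two proposed fixes do not close this: the crossing term is evaluated at the endpoint $x\in Y$, where neither $(1-e)u$ nor $f_n$ need vanish (the vanishing of both at $y\in O\cap X$ is already used in writing the term as $w(x,y)g_n(x)^2$, so it buys nothing more), and you cannot ``take $f_n$ supported away from the crossing region,'' since $f_n$ comes from abstract density and $(1-e)u$ is generally nonzero on the possibly infinite set of $Y$-vertices adjacent to $O\cap X$, so forcing $f_n$ to vanish there destroys convergence. To repair the proof you essentially have to reintroduce the explicit cutoff and the dominated-convergence argument with the bound by $\ow Q(e)$ --- at which point the detour through Theorem~\ref{thm-main} on the subgraph becomes unnecessary, which is exactly why the paper argues directly.
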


\begin{proof}
Let $Y= X \setminus O$, $\mu_{Y}$ be the restriction of $\mu$ to $Y$ and $\si_{Y}$ and $w_{Y}$ be the restrictions of $\si$ and $w$ to $Y\times Y$. We first assume that $(Y,w_{Y},\mu_{Y})$ is connected. From this it follows that $d_{Y}=d_{\si_{Y}}$ is a strongly intrinsic  path metric on $(Y,w_{Y},\mu_{Y})$ with $d_{Y}\geq d$.

\textit{Claim:} $(Y,d_{Y}) $ is metrically complete. \\
\textit{Proof of the claim:} If $(x_{n})$ is a Cauchy sequence in $Y$, then, by $d_{Y}\ge d$, it is a Cauchy sequence in $X$ and has a limit point in $\overline X$. However, as $Y=X\setminus O$, the limit point is not in $\partial_{C} X$. As $(X,d_{\si})$  is a discrete metric space, see Lemma~\ref{l:locfinite}~(a), $(x_{n})$ must be eventually constant which proves the claim.

\smallskip

For $R>0$ and fixed $x_{0}\in Y$, let $\eta_{R}:Y\to[0,1]$ given by
\begin{align*}
    \eta_{R}(x)=\left(\frac{2R - d_Y(x,x_{0})}{R}\right)_{+} \wedge 1.
\end{align*}
By completeness, $B_{2R}$ in $(Y,d_{Y})$ is finite, see Theorem A.1 in Appendix~\ref{PML}, and it follows that $\eta_{R}(1-e)D(Q^{\max})\subseteq C_c(X)$.

Let $\ow\nabla$ be the generalized gradient for $(Y,w_{Y},\mu_{Y})$.
Let $v\in(1-e) (D(Q^{\max})\cap L^{\infty}(X))$ and set $g_{R}=v-\eta_{R}v=(1-\eta_R)v$.
Now,
\begin{align*}
\ow Q(g_{R})=\frac{1}{2}\sum_{Y}|\ow\nabla g_{R}|^{2}+\sum_{x\in Y} \sum_{y\in O\cap X}w(x,y)g_{R}^{2}(x).
\end{align*}
For the first term we get, using that $d_{Y}$ is intrinsic and, therefore, that $|\ow\nabla \eta_{R}|^{2}\le \mu_Y/R^{2}$,
\begin{align*}
\frac{1}{2} \sum_{Y}|\ow\nabla g_{R}|^{2}&\leq\sum_{Y}v^{2}|\ow\nabla \eta_{R}|^{2}+\sum_{Y}(1-\eta_{R})^{2}|\ow\nabla v|^{2}\leq \frac{1}{R^{2}}\|v\|^{2}+\sum_{Y}(1-\eta_{R})^{2}|\ow\nabla v|^{2}\to0,
\end{align*}
as $R\to\infty$. For the second term let $u\in (D(Q^{\max})\cap L^{\infty}(X))$ such that $v=(1-e)u$. Then,
\begin{align*}
\sum_{x\in Y} \sum_{y\in O\cap X}w(x,y)g_{R}(x)^{2} \leq \|u\|_{\infty}^{2} \sum_{x\in Y} (1-\eta_{R}(x))^{2}\sum_{y\in O\cap X}w(x,y)(1-e(x))^{2}\to0
\end{align*}
as $R \to \infty$ by the Lebesgue dominated convergence theorem.  This follows, since $\eta_{R}\to1$ pointwise as $R\to\infty$ and ${\sum_{Y}\sum_{O\cap X}}w(x,y)(1-e(x))^{2}\leq \ow Q(e)\leq \mathrm{Cap}(O)^{2}< \infty$ (as $e(y)=1$ for $y\in O\cap X$).
Moreover, as $g_{R}$ converges pointwise to zero it also converges to zero in $L^{2}$.

In the case where $Y$ is not connected there are at most countably many connected components $Y_{i}$, $i\geq0$. For $v\in(1-e) (D(Q^{\max}) \cap L^\infty(X))$ let $v_{i}=v\vert_{Y_{i}}$. Since $\ow Q(v)=\sum_{i\geq 0} \ow Q(v_{i})$, the statement follows by a diagonal sequence argument.
\end{proof}


\subsection{Proof of Theorem~\ref{thm-locfinite2}}
\begin{proof}[Proof of Theorem~\ref{thm-locfinite2}]
Since we assume local finiteness, $L_c$ having a unique Markovian extension is equivalent to $D(Q)=D(\qn)$ by Theorem 5.2 in \cite{HKLW}.

If $D(Q)=D(\qn)$, then the assumption $\mathrm{Cap}(\partial_{C}X)<\infty$ implies   $\mathrm{Cap}(\partial_{C}X)=0$ by Lemma~\ref{l:boundary}.

If, on the other hand,  $\partial_{C}X$ has zero capacity, then $C_{c}(X)$ is  dense in $D(Q^{\max}) \cap L^\infty(X)$ with respect to $\aV{\cdot}_{\ow{Q}}$ by Lemmas~\ref{l:equpot} and~\ref{l:subgraphs}.   But  $D(Q^{\max}) \cap L^\infty(X)$ is dense in $D(\qn)$ with respect to $\aV{\cdot}_{\ow{Q}}$ since if  $u \in D(\qn)$, then $u_n = (u \vee -n) \wedge n$ converges to $u$ in $\aV{\cdot}_{\ow{Q}}$ as $n \to \infty$ (cf. \cite[Theorem 1.4.2 (iii)]{FOTBOOK}).  This implies $D(Q)=D(\qn)$.
\end{proof}


\subsection{Proof of Theorem~\ref{thm-Minkowski}}
\begin{proof}[Proof of Theorem~\ref{thm-Minkowski}]
As $\cd(\cb)>2$, there exists an $\eps>0$ and a sequence $r_n \to 0$ as $n \to \infty$ such that
\[ \mu(B_{r_n}(\cb)) < r_n^{2+\eps}. \]
For $R>0$ and $x \in X$, let
\[ \eta_R(x) = \left( \frac{2R - \ov{d}(x, \cb)}{R} \right)_+ \wedge 1. \]
In particular, $0 \leq \eta_R \leq 1$, $\eta_R |_{B_R(\cb)} \equiv 1$ and $\eta_R |_{X \setminus B_{2R}(\cb)} \equiv 0$.  It follows that
\[ \aV{\eta_R}^2 \leq \mu(B_{2R}(\cb)) \]
and
\begin{align*}
\ow{Q}(\eta_R) &\leq  \sum_{x\in B_{2R}(\cb)}\sum_{y \in X} w(x,y) \left( \frac{\ov{d}(x,\cb)}{R} - \frac{\ov{d}(y,\cb)}{R} \right)^2 \\
& \leq \frac{1}{R^2} \sum_{x\in B_{2R}(\cb)}\sum_{y \in X} w(x,y) d(x,y)^2 \leq \frac{\mu(B_{2R}(\cb))}{R^2}
\end{align*}
since $d$ is intrinsic.

Applying the above with $r_n/2$ in place of $R$, it follows that
\begin{align*}
\cp(\cb) &\leq \aV{\eta_{r_n/2}}_{\ow Q} \leq \left( \mu(B_{r_n}(\cb)) + \frac{4}{r_n^2} \mu(B_{r_n}(\cb)) \right)^{\frac{1}{2}} \\
&  \leq \left( r_n^{2+\eps} + 4 r_n^\eps \right)^{\frac{1}{2}} \to 0 \quad \mbox{as } n \to \infty.
\end{align*}
\end{proof}


\section{(Counter-)examples} \label{s:examples}
Here we present  the examples mentioned in Section~\ref{ss:main}.
In particular, we show that Markov uniqueness does not imply essential self-adjointness, that no conclusion can be drawn concerning uniqueness in the infinite capacity case and that the boundary can be polar or non-polar for any upper Minkowski codimension less than or equal to 2.

As we often use a graph with $X=\N_0$ and $x \sim y$ if and only if $|x-y|=1$ we make several preliminary observations concerning graphs of this type with a given path metric $d$.  First, in this case,
\[ \cb \not = \emptyset \quad \textrm{ if and only if } \quad l(X):=\sum_{x=0}^\infty d(x,x+1) < \infty,\]
see Theorem~\ref{t:locfinite} in Appendix~\ref{PML}.
Second, if $\cb \not = \emptyset$, then
\[ \cp(\cb)<\infty \quad \textrm{ if and only if } \quad \mu(X)<\infty. \]
This can be seen as follows: if $\mu(X)=\infty$, then every neighborhood of the boundary must have infinite measure so that $\cp(\cb)=\infty$.  If $\mu(X)<\infty$, then $1 \in D(\qn)$ which implies that $\cp(\cb)\leq \aV{1}_{\ow Q} = \mu(X) < \infty$.
These two observations will be used repeatedly below.

\begin{eg}[Polar Cauchy boundary (and consequently $D(Q) = D(\qn)$) but no essential self\-adjointness] \label{Ex;2.1}
Let $X = \mathbb Z$ with $w(x,y) = 1$ if $|x-y|=1$ and 0 otherwise.
The strongly intrinsic  path metric $d=d_{\sigma_0}$ introduced in Example~\ref{adapted} satisfies  $d(x,x+1) = \min \{ \sqrt{{\mu(x)}/{2}},\sqrt{{\mu(x+1)}/{2}},\ 1 \}$ for an arbitrary measure $\mu$.
Therefore, if the measure is chosen so that it satisfies $\sum_{x=-\infty}^\infty x^2 \sqrt{\mu(x)} < \infty$, then $(X,d)$ is metrically incomplete, the Cauchy boundary consists of two points  and $h:x\mapsto x$ is in $L^{2}(X,\mu)$ which we will use later.

Define $e_n$ by
\[  e_n (x) :=  \left( {|x|}/{n}  -1 \right)_+ \wedge 1. \]
One checks that $e_n \in D(\qn)$ with
\[ \ow Q(e_n)= \sum_{x=-\infty}^\infty (e_{n}(x)-e_{n}(x+1))^2 =  2 n \frac{1}{n^{2}} \to 0 \]
and that $e_n \to 0$ in $L^2(X,\mu)$ as $n \to \infty$ by the Lebesgue dominated convergence theorem.
Thus, the Cauchy boundary of $X$ is polar and $L_c=\Delta \vert_{C_c(X)}$ has a unique Markovian extension.

On the other hand, the formal Laplacian $\Delta$ acts as  $\Delta f (x)= \frac{1}{\mu(x)}  ( f(x) - f(x-1) + f(x) - f(x+1))$. Clearly, $h (x) = x$ is harmonic, square integrable by the choice of $\mu$, and $h \notin  D(\qn)$.
This shows that $h  \in D(L_c^*) \setminus D(\qn)$, that is,
$L_c$ is not essentially self-adjoint.
\end{eg}


\begin{eg} [Cauchy boundary with infinite capacity and essential self-adjointness] \label{Ex;2.2}
Let $X = \N_0$ with $\mu(X) = \infty$ and $w$ symmetric such that $w(x,y)>0$ if and only if $|x-y| = 1$. By \cite[Theorem~6]{KL} the operator $\Delta \vert_{C_c(X)}$ is essentially selfadjoint. (This can be also seen directly  as there are no non-trivial solutions to $(\Delta + \lm)u=0$ in $L^2(X, \mu)$ for $\lm > 0$.  This follows as any positive solution to this equation must be increasing by a minimum principle, see also equation \eqref{increments} below.)

If $d=d_{\si_0}$ and $w$ and $\mu$ are chosen to satisfy
\[l(X)=\lim_{x\to\infty}d(0,x) \leq \sum_{x=0}^\infty \left( \frac{1}{\Deg(x)} \right)^{\frac{1}{2}} \leq \sum_{x=0}^\infty \left(\frac{\mu(x)}{w(x,x+1)} \right)^{\frac{1}{2}} < \infty, \]
then it follows that $(X,d)$ is not metrically complete and that the boundary consists of a single point.  Since $\mu(X) = \infty$, $\cp(\cb) = \infty$ as noted above.
\end{eg}


\begin{eg}[Cauchy boundary with finite positive capacity and consequently $D(Q) \not = D(\qn)$] \label{Ex;2.3a}
Let $X=\N_{0}$ with $\mu(X)<\infty$  and let $w$ be symmetric with $w(x,y)>0$ if and only if $|x-y|=1$ and satisfying
\[l(X)=\lim_{x\to\infty}d(0,x)\le \sum_{x=0}^\infty \left( \frac{\mu(x)}{w(x,x+1) } \right)^{\frac{1}{2}} < \infty \quad\mbox{ and }\quad \sum_{x=0}^\infty \frac{1}{w(x,x+1)} < \infty \]
where $d=d_{\si_0}$.
In particular, the Cauchy boundary $\cb$ of $X$ consists of one point and has finite capacity.

Recall that  $D(Q) \not = D(\qn)$ is equivalent to  $(\De + \lm)u = 0$ having a non-trivial solution in $D(\qn)$ for any $\lm>0$  \cite[Corollary 4.3]{HKLW}.  By \cite[Lemma 4.3]{KLW}, the equation $(\De + \lm)u = 0$ on $X$ translates to
\begin{equation} \label{increments}
 u(x+1) - u(x) = \frac{\lm}{w(x,x+1)} \sum_{y=0}^x u(y) \mu(y)
\end{equation}
from which it follows, see \cite[Lemma 5.4]{KLW},  that $u$ is bounded if and only if
\[ \sum_{x=0}^\infty \frac{\sum_{y=0}^x \mu(y)}{w(x,x+1)} < \infty. \]
As $\mu(X) < \infty$, this is equivalent to
\[ \sum_{x=0}^\infty \frac{1}{w(x,x+1)} < \infty. \]
Furthermore, as $\mu(X) < \infty$, $u \in L^\infty(X)$ implies that $u \in L^2(X, \mu)$.  It is also not difficult to see that $\ow Q(u) < \infty$ in this case as, by (\ref{increments}), we get that
\[ w(x,x+1) (u(x+1)-u(x))^2 \leq \frac{1}{w(x,x+1)} \left(\lm \mu(X)\aV{u}_{\infty} \right)^2. \]
Therefore, as $u \in D(\qn)$ is non-trivial, $D(Q) \not = D(\qn)$.
Finally, $\cp(\partial_{C}X) >0$ follows by combining $\cp(\partial_{C}X) < \infty$,  $D(Q) \not = D(\qn)$, and Theorem~\ref{thm-locfinite2}.
\end{eg}

\begin{eg}[Cauchy boundary with infinite capacity and $D(Q) \not = D(\qn)$ (and consequently no essential self-adjointness)] \label{Ex;2.3}
We consider $X=\Z$ with $X = X_- \cup X_+$ where $X_-= - \N_{0}$ with $w$ and $\mu$ chosen as in  Example~\ref{Ex;2.2} and $X_+ = \N_0$  with $w$ and $\mu$ chosen as in Example~\ref{Ex;2.3a}.
In particular, the Cauchy boundary $\cb$ of $X$ consists of two points, $p_L$ and $p_R$, and has infinite capacity as $\cp(p_{L})=\infty$ by  $\mu(X_{-})=\infty$. On the other hand, by Example~\ref{Ex;2.3a} we have $0<\cp(p_{R})<\infty$ which gives  $D(Q) \not = D(\qn)$ by Lemma~\ref{l:boundary}.
\end{eg}


\begin{eg}[Polar Cauchy boundary with upper Minkowski codimension~2]\label{Ex;2.4}

Let $X=\N_0$ with $w(x,y) =1/8$ if $|x-y|=1$ and 0 otherwise and $\mu(x) = 4^{-x}$.  Therefore, for $x>0$, $\Deg(x) = 4^{x-1}$ so that, with  $d= d_{\si_0}$, we get  $d(x,x+1) = 2^{-x}.$  Furthermore, by using the technique of Example~\ref{Ex;2.1}, we can show that the Cauchy boundary consists of a single point, $p_R$, and that $\cp(p_R) = 0$.
Let $r(x) := \ov{d}(x, p_R) = \sum_{y=x}^\infty 2^{-y} = 2^{-(x-1)}$
so that $\mu(B_{r(x)}(p_R)) = \sum_{y=x}^\infty 4^{-y} = 4^{-(x-1)}/3 =r(x)^{2}/3$.
Therefore,
\[ \frac{\ln \mu(B_{r(x)}(p_R)) }{ \ln r(x)} = \frac{2\ln r(x)-\ln3}{\ln r(x)}\to 2 \quad \mathrm{ as } \ x \to \infty. \]
\end{eg}


\begin{eg}[Non-polar Cauchy boundary with upper Minkowski codimension 2]\label{Ex;2.5}
Let $X = \N_0$, with $w$ symmetric, satisfying $w(x,x+1) = (x+1)^2$ and 0 otherwise with
\[  d(x,x+1) = \frac{1}{2^{x+2}} \qquad \textrm{ and } \qquad \mu(x) = \frac{(x+1)^2}{4^x}. \]
It is easy to check that this metric is intrinsic.
As $l(X) < \infty$, $\mu(X) < \infty$, and $\sum_{x=0}^\infty \frac{1}{w(x,x+1)} < \infty$ it follows that
\[ 0 < \cp(\cb) < \infty \]
by the reasoning of Example~\ref{Ex;2.3a}.   Therefore, $\cd(\cb) \leq 2$ by Theorem~\ref{thm-Minkowski}.
By definition,
\begin{align*}
 r(x) := \overline{d}(x, \cb) = \frac{1}{2^{x+1}}
\quad\mbox{and}\quad
 \mu(B_{r(x)}(\cb) ) = \sum_{y=x}^\infty \frac{(y+1)^2}{4^y}.
\end{align*}
 Now, for every $\beta>1/4$, there exists an $M$ such that  $(x+1) ^2 \leq (4\be)^x $
for all $x \geq M$.  Hence, for all $x \geq M$ and $1/4 < \be < 1$, we have
 $\mu(B_{r(x)}(\cb)) \leq \sum_{y=x}^\infty \be^{y}=\be^x ({1-\be} )^{-1} .$
Therefore, for all $x \geq M$,
\[ \frac{\ln \mu(B_{r(x)}(\cb))}{\ln r(x) } \geq \frac{\ln \left( \be^x (1-\be)^{-1} \right)}{ \ln 2^{-(x+1)}} \]
which implies that
$\cd(\cb) \geq -{\ln \be}/{\ln {2}}. $
As $\beta>1/4$ was chosen arbitrarily, it follows that $\cd(\cb) \geq 2$ yielding that $ \cd(\cb) = 2$.
\end{eg}


\begin{eg}[Upper Minkowski codimension between 0 and 2 with polar and non-polar Cauchy boundary]\label{Ex;2.6}
Let $X = \N_0$ with $w(x,y)>0$ if and only if $|x-y|=1.$
Let, for $\al \in \mathbb{R}$,
\[ d(x,x+1) = \frac{1}{2^{\al(x+1)}} \qquad \textrm{ and  } \qquad \mu(x) = \frac{1}{2^{(2\al -1 )x}}. \]
Then, $l(X)<\infty$ for $\al>0$ and $\mu(X)<\infty$ for $\al>1/2$. Thus, $ \cp(\cb) < \infty$ for $\al>1/2$.
Now,
\[ r(x) := \overline{d}(x, \cb) = \sum_{y=x}^\infty  \frac{1}{2^{\al(y+1)}} =  \left( \frac{1}{2^\al -1} \right) \frac{1}{2^{\al x}} \]
and
\[ \mu\left( B_{r(x)}(\cb) \right) = \sum_{y=x}^\infty  \frac{1}{2^{(2\al -1 )y}} = \left( \frac{1}{2^{2\al -1} -1} \right) \frac{1}{2^{(2\al -1)(x-1) }} \]
so that
\[ \cd(\cb) = 2 - \frac{1}{\al} .\]
We now specify two choices of weights $w$: \\
\textbf{Case 1 - polar Cauchy boundary:}
Let $ w(x,x+1)=1$ for all  $x \in \N_0.$ Clearly, $d$ is intrinsic for all $\al > 0$.  Furthermore, if $\cp(\cb) < \infty$, then $\cp(\cb)=0$ as in Example~\ref{Ex;2.1}.  Hence, there exist examples of graphs with polar Cauchy boundary such that $0 < \cd(\cb) < 2$.

\noindent \textbf{Case 2 - non-polar Cauchy boundary:}
Let  $w(x,x+1) = 2^x$ for all  $x \in \N_0$.
It is easy to see that $d$ is intrinsic for all $\al \geq \frac{1}{2}$.  Furthermore, since  $\sum_{x=0}^\infty \frac{1}{w(x,x+1)} < \infty$
one can show that $\cp(\cb)>0$ as in Example~\ref{Ex;2.3a}. Consequently, there exists a family of graphs with non-polar Cauchy boundary such that $0 < \cd(\cb) < 2$.
\end{eg}

\appendix

\section{A Hopf-Rinow type theorem}
\label{PML}

Let $(X,w,\mu)$ be a weighted graph.

A metric space $(X,d)$ is said to be \textit{metrically complete} if every Cauchy sequence converges to an element in $X$.
A path $(x_{n})$ (finite or infinite) is called a \textit{geodesic} with respect to a  path metric $d=d_{\si}$ if $d(x_{0},x_{n})=l_{\si}((x_{0},\ldots,x_{n}))$ for all $n\geq0$. A weighted graph $(X,w,\mu)$ with a path metric $d=d_{\si}$ is said to be \textit{geodesically complete} if  all infinite geodesics have infinite lengths, i.e.,  $l_{\si}\left((x_{k})\right)=\lim_{n} l_{\si}((x_{0},\ldots,x_{n}))=\infty$ for all infinite geodesics $(x_{k})$.

We prove the following Hopf-Rinow type theorem.

\begin{thm}\label{t:locfinite}
Let $(X,w,\mu)$ be a locally finite weighted graph and $d$ be a path pseudo metric. Then, $(X,d)$ is a discrete metric space. Moreover, the following are equivalent:
\begin{itemize}
\item [(i)] $(X,d)$  is metrically complete.
\item [(ii)]  $(X,d)$  is geodesically complete.
\item [(iii)] Every distance ball is finite.
\item [(iv)] Every bounded and closed set is compact.
\end{itemize}
In particular, if $(X,d)$ is complete, then for all $x,y\in X$ there is a path $(x_{0},\ldots,x_{n})$ connecting $x$ and $y$ such that $d_{\si}(x,y)=l_{\si}((x_{0},\ldots,x_{n}))$.
\end{thm}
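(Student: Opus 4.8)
The plan is to prove the Hopf--Rinow type theorem by establishing the cycle of implications $(i)\Rightarrow(iii)\Rightarrow(iv)\Rightarrow(i)$ together with $(ii)\Leftrightarrow(iii)$, and then deriving the final existence-of-geodesic statement. First I would verify that $(X,d)$ is a discrete metric space: fix $x_0\in X$, and since the graph is locally finite, the set of neighbors of $x_0$ is finite, so $\delta:=\min\{\sigma(x_0,y)\mid y\sim x_0\}>0$; any path from $x_0$ to a distinct point $x$ must use at least one edge incident to some vertex, and a short argument (the first edge leaving $x_0$ has length at least $\delta$, or more carefully, any point within distance $<\delta$ of $x_0$ must equal $x_0$) shows $B_{\delta}(x_0)=\{x_0\}$. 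Hence every point is isolated and the topology is discrete; in particular closed sets are exactly all subsets and compactness is equivalent to finiteness, which makes $(iii)\Leftrightarrow(iv)$ essentially immediate.

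Next I would tackle $(i)\Rightarrow(iii)$ by contraposition. Suppose some ball $B_R(x_0)$ is infinite. Using local finiteness one builds, via a König's lemma / tree argument, an infinite injective path $(x_n)$ with all $x_n\in B_R(x_0)$: decompose $B_R(x_0)$ by combinatorial distance to $x_0$ into finitely-branching ``spheres''; if all were finite the ball would be finite, so some vertex has infinitely many descendants staying in the ball, and one follows such a branch. Along this path the partial sums $\sum\sigma(x_i,x_{i+1})$ are bounded (each partial sum is at least... no — rather, one extracts a subsequence that is Cauchy): since $d(x_0,x_n)\le R$ for all $n$, the sequence $(x_n)$ lies in a bounded set, and by a pigeonhole/accumulation argument on the values $d(x_0,x_n)\in[0,R]$ one finds a subsequence which is Cauchy but, by discreteness and injectivity, not eventually constant, hence non-convergent — contradicting completeness. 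The implication $(iii)\Rightarrow(i)$ is easy: a Cauchy sequence is bounded, hence eventually lies in a finite ball, hence (being Cauchy in a discrete space) is eventually constant and converges.

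For $(ii)\Leftrightarrow(iii)$: if some ball $B_R(x_0)$ is infinite, the infinite injective path constructed above can be chosen to be a geodesic ray — take $(x_n)$ to be a path realizing, or approximately realizing, the distances; more cleanly, in a locally finite graph with finite balls one gets geodesics between any two points by minimizing over the finitely many paths of bounded length, and an infinite bounded-length geodesic ray would be an infinite geodesic of finite length, contradicting geodesic completeness — so $(ii)\Rightarrow(iii)$. Conversely, if all balls are finite and $(x_k)$ is an infinite geodesic, then $l_\sigma((x_0,\dots,x_n))=d(x_0,x_n)\to\infty$, for otherwise the $x_n$ would all lie in a fixed finite ball while being pairwise distinct (a geodesic is injective), impossible; this gives $(iii)\Rightarrow(ii)$. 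Finally, the concluding statement follows from $(i)\Rightarrow(iii)$: given $x,y$, for any path connecting them of length $<d(x,y)+1$, all its vertices lie in the finite ball $B_{d(x,y)+1}(x)$, so there are only finitely many such paths, and the infimum defining $d_\sigma(x,y)$ is attained by one of them.

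I expect the main obstacle to be the construction in $(i)\Rightarrow(iii)$ (equivalently the heart of $(ii)\Rightarrow(iii)$): carefully extracting from an infinite bounded ball a sequence that is genuinely Cauchy rather than merely bounded, using local finiteness to control the branching and the boundedness of $d(x_0,\cdot)$ to force an accumulation of distance values, while making sure injectivity plus discreteness rules out the harmless ``eventually constant'' case. The pseudo-metric (rather than metric) setting adds a wrinkle — one must check that the discreteness argument still goes through when $\sigma$ is only required to be positive on edges — but since the graph is connected and locally finite this causes no real trouble.
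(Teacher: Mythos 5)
Your easy implications are fine: the discreteness argument, (iii)$\Leftrightarrow$(iv), (iii)$\Rightarrow$(i), (iii)$\Rightarrow$(ii) (via injectivity of geodesics), and the derivation of the final geodesic-existence statement from finiteness of balls all match what the paper does (the last point needs ``finitely many \emph{injective} paths'' plus loop-erasure, but that is cosmetic). The genuine gap is exactly where you suspected it: the step from ``some ball $B_R(x_0)$ is infinite'' to ``$(X,d)$ is not metrically complete / not geodesically complete''. Your proposed mechanism --- take an infinite injective path in the ball via K\"onig's lemma and then extract a Cauchy subsequence ``by a pigeonhole/accumulation argument on the values $d(x_0,x_n)\in[0,R]$'' --- does not work. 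Accumulation of the real numbers $d(x_0,x_n)$ says nothing about mutual distances: infinitely many points can sit at distance exactly $R/2$ from $x_0$ while being pairwise far apart, so boundedness (even with convergent distance values) does not yield a Cauchy subsequence; that would require total boundedness, which is essentially the statement being proved. Your fallback for (ii)$\Rightarrow$(iii), ``in a locally finite graph with finite balls one gets geodesics by minimizing over the finitely many paths of bounded length'', is circular in this direction: you are arguing under the hypothesis that a ball is \emph{infinite}, so you may not assume finite balls, and the set of paths of bounded $\sigma$-length between two vertices need not be finite when edge lengths are not bounded below.

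What is missing is the paper's key Lemma~\ref{l:locfinite2}: if some ball $B_r(o)$ is infinite, there exists an \emph{infinite geodesic of length at most $r$}. The construction minimizes $\sigma$-length over the set $P_n$ of injective paths from $o$ that reach \emph{combinatorial} distance $n$ while staying within combinatorial distance $n$ --- this set is finite by local finiteness regardless of whether metric balls are finite --- shows the minimizer is a geodesic of length $\leq r$ (if its length exceeded $r$, the ball $B_r(o)$ would be contained in a combinatorial ball and hence finite), and then runs a K\"onig-type diagonal extraction, using that subpaths of geodesics are geodesics, to produce an infinite geodesic $\gamma$ with $l_\sigma(\gamma)\leq r$. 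Along such a geodesic the partial lengths $d(o,x_n)$ are monotone and bounded by $r$, hence convergent, so the vertex sequence is Cauchy but (by discreteness and injectivity) non-convergent; this simultaneously gives the contrapositives of (i)$\Rightarrow$(ii) and (ii)$\Rightarrow$(iii). Without some version of this geodesic (or summable-edge-length) construction, your chain of implications does not close; the paper's Example~\ref{Ex;non-Hopf} shows how delicately the conclusion depends on local finiteness, so no soft metric-space argument of the kind you sketch can substitute for it.
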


\begin{rem}(a) Anytime a path pseudo metric $d$ induces the discrete topology on $X$ the following implications hold: (iii)$\Leftrightarrow$(iv)$\Rightarrow$(i)$\Rightarrow$(ii).  This is the case if and only if $\inf_{y \sim x} \sigma(x,y) >0$ for all $x \in X$.  In fact, (iv)$\Rightarrow$(i) holds for general metric spaces.
The stronger assumption of local finiteness is needed for the implications (ii)$\Rightarrow$(i), (i)$\Rightarrow$(iii) (or (iv)) and (ii)$\Rightarrow$(iii) (or (iv)).  See Example \ref{Ex;non-Hopf}  below.

(b) A similar statement as (i)$\Rightarrow$(iii) can also be found in \cite{Mi}.
\end{rem}

We prove the theorem in several steps through the following lemmas.

\begin{lem}\label{l:locfinite} Let $(X,w,\mu)$ be a locally finite weighted graph and $d$ be a path pseudo metric.  Then, the following hold: 
\begin{itemize}
\item [(a)] $(X,d)$ is a discrete metric space. In particular, $(X,d)$ is locally compact.
\item [(b)]  A set is compact in $(X,d)$ if and only if it is finite.
\end{itemize}
\end{lem}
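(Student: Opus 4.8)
The plan is to establish (a) first and then obtain (b) as a purely formal consequence valid in any discrete topological space.

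For (a), the essential observation is that local finiteness forces the edge weights $\si$ around any fixed vertex to be bounded below. Fix $x\in X$. Since $x$ has only finitely many neighbors, the quantity $\delta(x):=\min\{\si(x,y)\mid y\sim x\}$ is a minimum of finitely many strictly positive numbers, hence $\delta(x)>0$. Now let $(x_{0},\ldots,x_{n})$ be any path with $x_{0}=x$ and $x_{n}=z\neq x$; then $n\geq 1$ and the first edge satisfies $x_{1}\sim x_{0}$, so $\si(x_{0},x_{1})\geq \delta(x)$ and therefore $l_{\si}((x_{0},\ldots,x_{n}))\geq \si(x_{0},x_{1})\geq \delta(x)$. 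Taking the infimum over all such paths yields $d(x,z)\geq \delta(x)>0$ for every $z\neq x$. In particular $d$ separates points, so $d$ is genuinely a metric, and $B_{r}(x)=\{x\}$ whenever $0\leq r<\delta(x)$. Thus every singleton is open, i.e. $(X,d)$ carries the discrete topology; since in a discrete space every singleton is a compact open neighborhood, $(X,d)$ is locally compact.

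For (b): a finite subset of $X$ is a finite union of the compact singletons $\{x\}$, hence compact. Conversely, if $K\subseteq X$ is compact, then by (a) the family $\{\{x\}\}_{x\in K}$ is an open cover of $K$, which therefore admits a finite subcover; this is possible only if $K$ is finite.

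I do not expect a genuine obstacle in this argument; the one point requiring care is that the infimum defining $d_{\si}$ cannot collapse to $0$, and this is precisely where local finiteness is used. Without it a vertex could carry neighbors along which $\si$ tends to $0$, which would destroy discreteness and, with it, the equivalences of Theorem~\ref{t:locfinite} (cf. Example~\ref{Ex;non-Hopf}).
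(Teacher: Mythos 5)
Your proof is correct and takes essentially the same approach as the paper: local finiteness yields $\delta(x)=\min_{y\sim x}\sigma(x,y)>0$, every path leaving $x$ has length at least $\delta(x)$, so $d$ is a metric inducing the discrete topology, and (b) follows by covering a compact set with open singletons. The only cosmetic difference is that you bound $d(x,z)\geq\delta(x)$ directly via the first edge of an arbitrary path, while the paper passes through a neighbor $y\sim x$ with $d(x,y)\leq d(x,z)$; both rest on the same use of local finiteness.
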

\begin{proof}
Local finiteness and the assumption $\si(x,y)>0$, $x\sim y$, imply that for all $x\in X$ there is an $r>0$ such that $d(x,y)>r$ for all $y\in X$ with $y\sim x$.
First, by the definition of $d$, we have that for all $x,z\in X$ there is $y\sim x$ with $d(x,y)\leq d(x,z)$. Thus, $d(x,y)=0$ implies $x=y$, therefore, $d$ is a metric. Second, it yields that  $B_{r}(x)=\{x\}$ and $\{x\}$ is an open set which shows (a). From this we conclude that  for any infinite set $U$ the cover $\{\{x\}\mid x\in U\}$ has no finite subcover. The other direction of (b) is clear.
\end{proof}


The authors are grateful to Florentin M\"unch for a crucial idea in the proof of the following lemma.

\begin{lem}\label{l:locfinite2} Let $(X,w,\mu)$ be a locally finite weighted graph and $d$ be a path metric. Assume that $B_{r}$ is infinite for some $r\geq0$. Then, there exists an  infinite geodesic of bounded length.
\end{lem}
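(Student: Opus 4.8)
The plan is to produce the required geodesic by a König-type selection argument, applied not to genuine geodesics between pairs of vertices — which need not exist once balls are infinite — but to a family of \emph{approximate} geodesics running out to infinitely many distinct vertices of the infinite ball. Following an idea of F.\ M\"unch, the crucial point is that each finite initial segment of the path we build occurs as the beginning of approximate geodesics whose error tends to $0$, and this forces the segment to be an honest geodesic.

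First I would fix $x_0 \in X$ with $B_r = B_r(x_0)$ infinite and choose pairwise distinct vertices $z_m \in B_r \setminus \{x_0\}$, $m \geq 1$. For each $m$ pick a path $\Gamma^{(m)} = (y^m_0, \ldots, y^m_{n_m})$ from $x_0 = y^m_0$ to $z_m = y^m_{n_m}$ with $l_\sigma(\Gamma^{(m)}) \leq d(x_0, z_m) + 1/m \leq r + 1$. A one-line computation, using that $l_\sigma \geq d$ on subpaths together with the triangle inequality for $d$, shows that every initial segment $(y^m_0, \ldots, y^m_k)$ is again a $(1/m)$-approximate geodesic, that is, $l_\sigma((y^m_0, \ldots, y^m_k)) \leq d(x_0, y^m_k) + 1/m$.

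Next I would build $(y_k)_{k \geq 0}$ inductively. Put $y_0 = x_0$ and $\mathcal{M}_0 = \mathbb{N}$. Given an infinite index set $\mathcal{M}_k$ with $n_m \geq k$ and $(y^m_0, \ldots, y^m_k) = (y_0, \ldots, y_k)$ for all $m \in \mathcal{M}_k$, note that at most one $m \in \mathcal{M}_k$ can have $n_m = k$, since that would force $z_m = y_k$ and the $z_m$ are distinct. Hence infinitely many $m \in \mathcal{M}_k$ have $n_m > k$, so for these the vertex $y^m_{k+1}$ is defined and $y^m_{k+1} \sim y_k$; as $y_k$ has only finitely many neighbours by local finiteness, infinitely many of them share a common value $y_{k+1}$, and I set $\mathcal{M}_{k+1} \subseteq \mathcal{M}_k$ to be the corresponding infinite index set. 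This yields an infinite path $(y_k)_{k \geq 0}$.

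Finally I would verify the two required properties. For the length bound, for each $k$ pick $m \in \mathcal{M}_{k+1}$, so $(y_0, \ldots, y_{k+1})$ is an initial segment of $\Gamma^{(m)}$ and hence $\sum_{i=0}^k \sigma(y_i, y_{i+1}) = l_\sigma((y_0, \ldots, y_{k+1})) \leq r + 1$; letting $k \to \infty$ gives $l_\sigma((y_k)) \leq r + 1 < \infty$. For the geodesic property, fix $k$; since $\mathcal{M}_k$ is infinite we may choose $m \in \mathcal{M}_k$ with $m$ arbitrarily large, and then $d(x_0, y_k) \leq l_\sigma((y_0, \ldots, y_k)) \leq d(x_0, y_k) + 1/m$ by the second paragraph, so $l_\sigma((y_0, \ldots, y_k)) = d(x_0, y_k)$. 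Thus every initial segment of $(y_k)_{k\geq 0}$ realizes the distance, so $(y_k)_{k \geq 0}$ is an infinite geodesic of bounded length. I expect the only delicate point to be that the inductive construction never terminates, and this hinges exactly on the $z_m$ being distinct (so at each stage at most one competing path has already stopped) together with local finiteness; the "approximate geodesic restricts to an approximate geodesic" estimate and the final passage to the limit are routine, and everything else is bookkeeping.
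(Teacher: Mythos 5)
Your argument is correct; no gaps. The skeleton is the same K\"onig-type extraction as in the paper --- use local finiteness to pass, level by level, to an infinite subfamily of paths sharing a common initial segment --- but the family you feed into it is different, and this shifts where the work is done. The paper first manufactures \emph{exact} finite geodesics: for each $n$ it considers the set $P_n$ of self-avoiding paths from $o$ that reach combinatorial distance $n$ while staying in the combinatorial $n$-ball (finite by local finiteness), takes a length-minimal element, checks that minimality forces it to be a geodesic, and uses the infiniteness of $B_r$ to see its length is at most $r$; non-termination of the extraction is then automatic because paths in $P_n$ reach combinatorial distance $n$, and geodesy of the limit follows since subpaths of geodesics are geodesics. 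You instead take $(1/m)$-approximate geodesics to infinitely many \emph{distinct} targets $z_m\in B_r$, whose existence is immediate from the definition of the path metric, and you need two substitutes: the observation that at each stage at most one competing path has already terminated (distinctness of the $z_m$), which keeps the induction alive, and the estimate that initial segments of $\varepsilon$-approximate geodesics are $\varepsilon$-approximate, which, by letting $m\to\infty$ along the infinite index sets $\mathcal M_k$, turns the initial segments of the limit path into exact geodesics. Both routes are valid; yours avoids proving existence of exact geodesics (the paper's minimality argument over $P_n$ and the combinatorial-sphere bookkeeping) at the price of the approximate-geodesic estimates and the limiting step. Incidentally, your bound $r+1$ sharpens to $r$ for free: for $m\in\mathcal M_k$ one has $l_\si((y_0,\ldots,y_k))\le d(x_0,z_m)+1/m\le r+1/m$, and $m\to\infty$ gives $l_\si((y_k))\le r$, matching the paper's conclusion $l(\gm)\le r$.
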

\begin{proof}
 Let $o \in X$ be the center of the infinite ball $B_{r}$ of radius $r$ and let $d_{N}$ be the natural graph distance.
Let $P_{n}$, $n\geq0$,  be the set of finite paths $(x_{0},\ldots,x_{k})$ such that  $x_{0} = o$, $x_i \not = x_j$ for $i \not = j$, $d_{N}(x_{k},o)=n$ and $d_{N}(x_j,o)\leq n$ for $j=0,\ldots,k$.

\textit{Claim:}  $\Gm_{n}=\{\gm\in P_{n}\mid \mbox{$\gm$ geodesic with respect to $d$, } l(\gm)\leq r\}\neq\emptyset$ for all  $n\geq0$.\\
\textit{Proof of the claim:} The set $P_{n}$ is finite by local finiteness of the graph and, thus, contains a minimal element $\gm=(x_{0},\dots,x_{K})$ with respect to the length $l$, i.e., for all $\gm'\in P_{n}$ we have
$l(\gm')\ge l(\gm)$. Then, $\gm$ is a geodesic: for every path $(x_{0}',\ldots,x_{M}')$ with $x_{0}'=o$ and $x'_{M}=x_{K}$, we let $m\in\{n,\ldots,M\}$ be such that $(x_{0}',\ldots,x_{m}')\in P_{n}$.  By the minimality  of $\gm$ we infer
\begin{align*}
    l((x_{0}',\ldots,x_{M}'))\geq l((x_{0}',\ldots,x_{m}'))\ge l(\gm).
\end{align*}
It follows that $\gm$ is a geodesic.
Clearly, $l(\gm)\leq r$, as otherwise $B_{r}\subseteq \{y\in X\mid d_{N}(y,o)\leq n-1\}$ which would imply the finiteness of $B_{r}$ by the local finiteness of the path space.
Thus, $\gm\in\Gm_{n}$ which proves the claim.


We inductively construct an infinite geodesic $(x_{k})$ with bounded length: We set
$x_{0} = o$. Since $\Gm_{n}\neq\emptyset$, there is a geodesic in $\Gm_{n}$  for every $n \geq0$ such that $x_{0}$ is a subgeodesic. Suppose we have constructed a geodesic $(x_{1},\ldots,x_{m})$ such that for
all $n \geq m$ there is a geodesic in $\Gm_{n}$ that has $(x_{1},\ldots,x_{m})$ as a subgeodesic. By local
finiteness $x_{m}$ has finitely many neighbors. Thus, there must be a neighbor $x_{m+1}$ of $x_{m}$ such that for infinitely many $n$ the path
$(x_{0},\ldots,x_{m},x_{m+1})$ is a subpath of a geodesic in $\Gm_{n}$.
However, a subpath of geodesic is a geodesic.  Thus, there  is an infinite geodesic $ \gm=(x_{k})_{k\ge0}$ with $l(\gm)=\lim_{n\to\infty}l((x_{0},\ldots,x_{n}))\leq r$ as $(x_{0},\ldots,x_{n})\in \Gm_{n}$ for all $n\geq0$.
\end{proof}

\begin{proof}[Proof of Theorem~\ref{t:locfinite}]
The fact that $(X,d)$ is a discrete metric space follows from Lemma~\ref{l:locfinite}. We now turn to the proof of the equivalences. We start with (i)$\Rightarrow$(ii). If there is a bounded geodesic, then it is a Cauchy sequence. Since a geodesic is a path it is not eventually constant, thus it does not converge by discreteness. Hence, $(X,d)$ is not metrically complete. To prove (ii)$\Rightarrow$(iii) suppose that there is a distance ball that is infinite.
By Lemma~\ref{l:locfinite2} there is a bounded infinite geodesic and $(X,d)$ is not geodesically complete.
From Lemma~\ref{l:locfinite} (b) we deduce (iii)$\Leftrightarrow$(iv). Finally, we consider the direction (iv)$\Rightarrow$(i). If every bounded and closed set is compact, then every closed distance ball is compact. Then, by  Lemma~\ref{l:locfinite}~(b) every distance ball is finite and it follows that $(X,d)$ is metrically complete.
\end{proof}

We finish this appendix by giving several (counter-)examples to show that some of the statements above fail to be true in the case of non-locally finite graphs. We present the examples with respect to the path metric with
$\si = \si_{0}$ (see Example~\ref{adapted}).
Another example of this type can be found in \cite[Example~14.1]{FLW}.

\begin{eg} \label{Ex;non-Hopf}
Let $\mu\equiv1$, $\si = \si_{0}$, and $d=d_\si$.

(1) \textit{A metrically and geodesically complete graph with non compact distance balls.} \\
This example can be thought of as a star graph, where the rays are two subsequent edges.
Let $X=\N_{0}$ and let $w$ be symmetric with $w(0,2n)=1/2^{n}$ and $w(2n-1,2n)=1-1/2^{n}$ for $n\in\N$ and $w\equiv0$ otherwise. We have $d(0,2n)=1$ for $n\in\N$. Then, $(X,d)$ is metrically (and geodesically) complete but $B_{1}(0)$ is not compact. 

(2) \textit{A non locally compact graph.} \\
This example can be thought of as a star graph where the rays are copies of $\N$ whose lengths become shorter.
Let $X=\N_{0}^{2}$ and let $w$ be symmetric with $w((0,0),(m,0))=1/2^m$ for $m \in \N$ and $w((m,n-1),(m,n))=2^{2(m+n)}/5$ for $m, n \in \N$ and $w\equiv0$ otherwise.
Then, $\mathrm{Deg}((0,0))=1$, $\Deg((m,0)) = 1/2^m + 2^{2(m+1)}/5$ and $\mathrm{Deg}((m,n))=2^{2(m+n)}$ for $m, n \in \N$. Hence, we have $d((m,n-1),(m,n))=2^{-(m+n)}$ for $m,n \in \N$
and $1/2^{m+1}\leq d((0,0),(m,n))\leq 3/ 2^{m+1}$.
For a ball $B_{r}((m,n))$, $m,n\ge0$, $r>0$, denote by $U_{r}((m,n))$ its interior. Now for $\eps>0$, $\{U_{\eps/2}((0,0))\}\cup \{ U_{1/ 2^{(m+n+1)}}((m,n))\mid m \geq 1, n\geq0\}$ is an open cover of $B_{\eps}((0,0))$  with no finite subcover.


(3) \textit{A non Hausdorff space.}\\
This example can be thought as two vertices which are connected by infinitely many paths that become shorter.
Let $X=\N_{0}\cup\{\infty\}$ and let $w$ be symmetric with $w(0,2n)=w(\infty,2n)=1/2^{n}$ and $w(2n-1,2n)=2^{2n}$ and $w(n,m)=0$ all other $m,n\in\N_{0}$. Then, $\sigma(0,2n)=\sigma(\infty,2n)\leq1/2^{n}$. Hence, $d(0,\infty)=0$.

(4) \textit{An infinite ball and non discreteness.}\\
This example is a modification of (1).
Let $X=\N_{0}$ and let $w$ be symmetric with $w(0,2n)=1/2^{n}$ and $w(2n-1,2n)=2^{n}$ and $w(n,m)=0$ all other $m,n\in\N_{0}$. Then, every $d$-ball about $0$   is compact but it contains infinitely many vertices. Moreover, the vertices $x_{n}=2n$ converge to $x=0$ with respect to $d$. (This is, in particular, a counterexample to Lemma~\ref{l:locfinite} for non locally finite graphs).

(5) \textit{A geodesically complete graph which is not metrically complete.}\\
This example is an extension of (3) and can be thought as a ``line graph'' where between each two points on the line there are infinitely many ``line segments'' that become shorter.
Let $X = \N_0^2$ and let $w$ be symmetric with $w((m,0),(m,2n)) = 1/ 2^n = w((m+1,0),(m,2n))$ and $w((m,2n),(m,2n-1)) = 2^{2(m+1)} - 3 / 2^n$ for $m \in \N_0, n \in \N$ and $w \equiv 0$ otherwise.  It follows that $\Deg((m,2n)) = 2^{2(m+1)}- 1/2^n$ implying that $d((m,0),(m+1,0)) = 1/2^m$.  Thus $(x_m)= ((m,0))$ is a Cauchy sequence which does not converge.  On the other hand, the space is geodesically complete as there are no infinite geodesics.

\end{eg}

\textbf{Acknowledgements.}  M.K. enjoyed various inspiring discussion with Daniel Lenz and gratefully acknowledges the
financial support from the German Research Foundation (DFG).  R.K.W. thanks J{\'o}zef Dodziuk for numerous insights and acknowledges the financial support of the FCT under project PTDC/MAT/101007/2008 and of the PSC-CUNY Awards, jointly funded by the Professional Staff Congress and the City University of New York.  The authors are grateful to Ognjen Milatovic for a careful reading of the manuscript and suggestions.

\end{document}